\documentclass{amsart}

\usepackage{amssymb}
\usepackage{amsthm}
\usepackage{amsmath}
\usepackage{graphicx}
\usepackage{pifont}
\usepackage{txfonts}
\usepackage{color}

\theoremstyle{plain}
\newtheorem{theorem}{Theorem}[section]
\newtheorem{proposition}[theorem]{Proposition}
\newtheorem{corollary}[theorem]{Corollary}
\newtheorem{lemma}[theorem]{Lemma}

\newtheorem*{Theorem}{Theorem}
\newtheorem*{Corollary}{Corollary}

\newtheorem*{conjecture*}{Conjecture}
\theoremstyle{definition}
\newtheorem{definition}[theorem]{Definition}
\newtheorem*{definition*}{Definition}
\newtheorem*{example*}{Example}
\newtheorem*{notation*}{Notation}
\newtheorem*{notation-conv*}{Notation and convention}
\newtheorem*{convention*}{Convention}
\theoremstyle{remark}
\newtheorem{remark}[theorem]{Remark}
\newtheorem*{remark*}{Remark}



\newcommand{\Z}{\mathbb{Z}}

\newcommand{\R}{\mathbb{R}}
\newcommand{\C}{\mathbb{C}}

\newcommand{\upperH}{\mathbb{H}}

\newcommand{\SLC}[1][2]{\mathrm{SL}_{#1}(\C)}
\newcommand{\PSLC}{\mathrm{PSL}_{2}(\C)}
\newcommand{\SLR}[1][2]{\mathrm{SL}_{#1}(\R)}
\newcommand{\TSLR}{\widetilde{\mathrm{SL}}_{2}(\R)}
\newcommand{\TPSLR}{\widetilde{\mathrm{PSL}}_{2}(\R)}
\newcommand{\PSLR}{\mathrm{PSL}_{2}(\R)}
\newcommand{\SU}{\mathrm{SU}(2)}
\newcommand{\SUII}{\mathrm{SU}(1,1)}

\newcommand{\trho}{\widetilde{\rho}}
\newcommand{\shift}{\mathrm{sh}}

\newcommand{\trace}{{\rm tr}\,}
\newcommand{\I}{I}





\newcommand{\mfd}{M}
\newcommand{\Seifert}{M\hbox{$\big(\frac{1}{b}, \frac{\alpha_1}{\beta_1}, \ldots, \frac{\alpha_n}{\beta_n}\big)$}}

\newcommand{\univcover}[1]{\widetilde{#1}}


\newcommand{\Hom}{\mathop{\mathrm{Hom}}}

\newcommand{\Ext}{\mathop{\mathrm{Ext}}}
\newcommand{\Isom}{\mathop{\mathrm{Isom}}}

\newcommand{\Tor}[2]{\mathop{\mathrm{Tor}}\nolimits (#1;#2)}

\newcommand{\ie}{i.e.,\,}
\begin{document}


\title[Asymptotics of R-torsion for Seifert manifolds and $\PSLR$-reps. of Fuchsian groups]{
  The asymptotic behavior of the Reidemeister torsion for Seifert manifolds and $\PSLR$-representations of Fuchsian groups
}

\author{Yoshikazu Yamaguchi}

\address{Department of Mathematics,
  Akita University,
  1-1 Tegata-Gakuenmachi, Akita, 010-8502, Japan}
\email{shouji@math.akita-u.ac.jp}


\keywords{Seifert fibered spaces; geometric structure; Reidemeister torsion; asymptotic behaviors}
\subjclass[2010]{Primary: 57M27, 57M05, Secondary: 57M50}

\begin{abstract}
  We show that a $\PSLR$-representation of a Fuchsian group
  induces the asymptotics of the Reidemeister torsion
  for the Seifert manifold corresponding to the euler class of the
  $\PSLR$-representation.
  We also show that the limit of leading coefficient of
  the Reidemeister torsion is determined by the euler class
  of a $\PSLR$-representation of a Fuchsian group.
  In particular, 
  the leading coefficient of the Reidemeister torsion for
  the unit tangent bundle over a two--orbifold converges to $-\chi \log2$
  where $\chi$ is the Euler characteristic of the two--orbifold.
  We also give a relation between $\Z_2$-extensions for
  $\PSLR$-representations of a Fuchsian group and
  the asymptotics of the Reidemeister torsion.
\end{abstract}


\maketitle

\section{Introduction}
The previous works~\cite{yamaguchi:RtorTorusKnots, Yamaguchi:asymptoticsRtorsion}
of the author have investigated the asymptotic behavior of the Reidemeister torsion 
for a Seifert manifold with an sequence of $\SLC[2N]$-representation of the fundamental group.
Here the sequence of $\SLC[2N]$-representations starts with an $\SLC$-representation
and the remaining representations are given by 
the composition with the irreducible $2N$-dimensional representations of $\SLC$.
A sequence of $\SLC[2N]$-representations defines a sequence of the Reidemeister torsion of
a Seifert manifold. We can consider the asymptotic behavior of the sequence 
given by the Reidemeister torsions.
Under the constraint that $\SLC[2N]$-representations send a regular fiber to $-\I_{2N}$,
the observation in~\cite{Yamaguchi:asymptoticsRtorsion}
revealed the growth order and the convergence of the leading coefficient
of the Reidemeister torsion for a Seifert manifold.

In this paper, we observe when we have a natural situation 
for a Seifert manifold 
and $\SLR$-representations satisfying our constraint.
We will deal with closed Seifert manifolds
whose base orbifolds have the negative Euler characteristics.
Namely, they admit the $\upperH^2 \times \R$ or $\TSLR$-geometry.
We can regard
the fundamental group of the base orbifold as a cocompact Fuchsian group $\Gamma$ 
which can be embedded in $\PSLR$.
The fundamental group of a Seifert manifold is a central extension of the Fuchsian group $\Gamma$
by $\Z$.
The universal cover $\TPSLR$ is also a central extension of $\PSLR$ by $\Z$.
If we choose a homomorphism $\bar\rho$ from $\Gamma$ to $\PSLR$ for a Seifert manifold $\mfd$,
then we have a lift $\trho$ from $\pi_1(\mfd)$ to $\TPSLR$.
This is due to the work~\cite{JankinsNeumannHomomorphisms} by M.~Jankins and W.~Neumann
(we review this in Section~\ref{sec:rep_Fuchsian}).
Since $\TPSLR$ is also the universal cover of $\SLR$,
we have the $\SLR$-representation $\rho$ of $\pi_1(\mfd)$ given by the composition of $\trho$
with the projection from $\TPSLR$ onto $\SLR$.
The $\SLR$-representation $\rho$ induces the $\SLR[2N]$-representations which define
the acyclic chain complexes of the Seifert manifold $\mfd$.
We can observe the asymptotics of the Reidemeister torsion 
starting with a $\PSLR$-representation of a Fuchsian group.

We refer the case of a Seifert homology sphere and $\SU$-representations
to the previous work~\cite{Yamaguchi:asymptoticsRtorsion}, in which 
the maximum and minimum of the limits have been observed in detail.
The limit of the leading coefficient is determined by
the images of exceptional fibers by a given $\SU$-representation.

First, we will show that 
the limit of leading coefficient in the asymptotic behavior of the Reidemeister torsions
is determined by a representation of a Fuchsian group. 
Here we start with a $\PSLR$-representation of the Fuchsian group.
Let $\bar\rho$ be a $\PSLR$-representation of a cocompact Fuchsian group
$\Gamma = \Gamma(g; \alpha_1, \ldots, \alpha_n)$
(for the notation, see Section~\ref{sec:rep_Fuchsian}).
If we have the following diagram:
\[
\setlength{\arraycolsep}{2pt}
\begin{array}{ccccccccc}
  0 & \to & \Z & \to &\pi_1(\Seifert)& \to & \Gamma & \to & 1 \\
  &     & || &   & \quad \downarrow \raisebox{1pt}{$\scriptstyle \widetilde{\rho}$} & & \quad \downarrow \raisebox{2pt}{$\scriptstyle \bar\rho$} &  \\
  0 & \to & \Z & \to & \TPSLR & \to & \PSLR & \to & 1,
\end{array}
\]
then we have the $\SLR$-representation $\rho$ of $\pi_1(\Seifert)$
which induces a sequence of the Reidemeister torsion $\Tor{\mfd}{\rho_{2N}}$
(for the details on this sequence,
we refer to Theorem~\ref{thm:asymptotics_Rtorsion} and Lemma~\ref{lemma:h_to_shift_1}).
The sequence of $\log|\Tor{\Seifert}{\rho_{2N}}$ has the growth order of $2N$ and the following leading coefficient.

\begin{Theorem}[Theorem~\ref{thm:main_I}]
  For the induced $\SLR[2N]$-representations $\rho_{2N}$, 
  the limit of the leading coefficient of $\log |\Tor{\Seifert}{\rho_{2N}}|$
  is expressed as 
  \begin{equation*}
    \lim_{N \to \infty}
    \frac{\log|\Tor{\Seifert}{\rho_{2N}}|
    }{2N}
    = 
    - \Big(2 -2g - \sum_{j=1}^{n} \frac{\lambda_j - 1}{\lambda_j}\Big) \log 2.
  \end{equation*}
  Here $\lambda_j$ is the order of $\bar \rho (q_j)$ where $q_j$ is the homotopy class
  of a loop around the $j$-th branched point on the two--orbifold.
\end{Theorem}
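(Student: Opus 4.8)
The plan is to feed the explicit product expression for $\Tor{\Seifert}{\rho_{2N}}$ furnished by Theorem~\ref{thm:asymptotics_Rtorsion} into an elementary asymptotic estimate, exploiting the normalization $\rho_{2N}(h)=-\I_{2N}$ guaranteed by Lemma~\ref{lemma:h_to_shift_1}. After collecting terms, that expression involves only two kinds of factors that can grow with $N$: a power of $\det(\rho_{2N}(h)-\I_{2N})$ coming from the genus-$g$ base surface, and, for each exceptional fiber $j$, a factor $\det(\rho_{2N}(\tilde q_j)-\I_{2N})^{-1}$, where $\tilde q_j\in\pi_1(\Seifert)$ is a lift of the cone loop $q_j$. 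All remaining factors (determinants of $\rho_{2N}$ on the genus generators, sign terms, normalizing constants) are bounded independently of $N$ and therefore do not affect the limit; the combined exponent of $\det(\rho_{2N}(h)-\I_{2N})$ turns out to be $2g-2+n$.

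First I would treat the base contribution. By Lemma~\ref{lemma:h_to_shift_1} the lift $\trho$ sends $h$ to the generator of the centre of $\TPSLR$, which maps to $-\I_2\in\SLR$, so $\rho(h)=-\I_2$ and $\rho_{2N}(h)=(-1)^{2N-1}\I_{2N}=-\I_{2N}$. Hence $\det(\rho_{2N}(h)-\I_{2N})=(-2)^{2N}$ and $\log|\det(\rho_{2N}(h)-\I_{2N})|=2N\log 2$ exactly, so this part contributes $(2g-2+n)\log 2$ to the limit of the leading coefficient.

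The heart of the proof is the asymptotics of $\log|\det(\rho_{2N}(\tilde q_j)-\I_{2N})|$. The element $\bar\rho(q_j)$ is elliptic of order $\lambda_j$ in $\PSLR$, so $\rho(\tilde q_j)\in\SLR$ is conjugate to a rotation with eigenvalues $\zeta_j^{\pm1}$ for a $2\lambda_j$-th root of unity $\zeta_j$; acyclicity of $\rho_{2N}$ forces $\zeta_j$ to be \emph{primitive}, i.e.\ $\zeta_j^{\lambda_j}=-1$ (the other lift of $\bar\rho(q_j)$ to $\SLR$ would give some $\rho_{2N}(\tilde q_j)$ the eigenvalue $1$). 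Then $\rho_{2N}(\tilde q_j)$ has eigenvalues $\zeta_j^{\ell}$ with $\ell$ running over the $2N$ odd integers of $[-(2N-1),2N-1]$, so $\det(\rho_{2N}(\tilde q_j)-\I_{2N})=\prod_{\ell}(\zeta_j^{\ell}-1)$. Since $\zeta_j^{2\lambda_j}=1$, each factor depends only on $\ell\bmod 2\lambda_j$, and the $2N$ exponents hit each of the $\lambda_j$ odd residues mod $2\lambda_j$ equally up to an error bounded in $N$; hence, up to a factor bounded independently of $N$,
\[
  \det(\rho_{2N}(\tilde q_j)-\I_{2N})
  = \Big(\prod_{\substack{0\le r<2\lambda_j\\ r\ \text{odd}}}(\zeta_j^{r}-1)\Big)^{2N/\lambda_j}.
\]
Now $(\zeta_j^{r})^{\lambda_j}=(-1)^{r}=-1$ for odd $r$, and the $\lambda_j$ numbers $\zeta_j^{r}$ are pairwise distinct, so $\{\zeta_j^{r}:r\ \text{odd}\}$ is exactly the set of roots of $X^{\lambda_j}+1$; evaluating the monic polynomial with those roots, namely $X^{\lambda_j}+1$, at $X=1$ gives $\big|\prod_{r\ \text{odd}}(\zeta_j^{r}-1)\big|=2$. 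Therefore $\log|\det(\rho_{2N}(\tilde q_j)-\I_{2N})|=\tfrac{2N}{\lambda_j}\log 2+O(1)$, and the $j$-th exceptional fiber contributes $-\tfrac{1}{\lambda_j}\log 2$ to the limit.

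Adding the base and fiber contributions gives
\[
  \lim_{N\to\infty}\frac{\log|\Tor{\Seifert}{\rho_{2N}}|}{2N}
  = \Big(2g-2+n-\sum_{j=1}^{n}\frac{1}{\lambda_j}\Big)\log 2
  = -\Big(2-2g-\sum_{j=1}^{n}\frac{\lambda_j-1}{\lambda_j}\Big)\log 2,
\]
which is the assertion. I expect the main obstacle to be bookkeeping around Theorem~\ref{thm:asymptotics_Rtorsion}: identifying exactly which conjugacy classes of $\pi_1(\Seifert)$ enter the formula and with which exponents, and checking that every auxiliary factor stays bounded in $N$ so that only the $\det(\rho_{2N}(h)-\I_{2N})$ and $\det(\rho_{2N}(\tilde q_j)-\I_{2N})$ factors survive; once the primitivity $\zeta_j^{\lambda_j}=-1$ is in place, the eigenvalue computation is routine.
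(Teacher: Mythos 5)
Your route is genuinely different from the paper's: the paper verifies the hypothesis $\rho(h)=-\I$ via Lemma~\ref{lemma:h_to_shift_1}, quotes Theorem~\ref{thm:asymptotics_Rtorsion} as a black box for the limit (with $\lambda_j$ defined as half the order of $\rho(\ell_j)$), and then identifies $\lambda_j$ with the order of $\bar\rho(q_j)$ through Proposition~\ref{prop:order_ell_j} and Lemma~\ref{lemma:q_j_to_b_j_a_j}; you instead try to re-derive the asymptotics directly from a product formula for $\Tor{\Seifert}{\rho_{2N}}$. The parts you carry out in detail are fine: $\rho_{2N}(h)=-\I_{2N}$ does give $\log|\det(\rho_{2N}(h)-\I_{2N})|=2N\log 2$, and for a matrix whose eigenvalue $\zeta$ is a \emph{primitive} $2\lambda$-th root of unity, grouping the $2N$ odd exponents modulo $2\lambda$ and evaluating $\prod_{r\,\mathrm{odd}}(\zeta^{r}-1)=\pm(1^{\lambda}+1)=\pm 2$ is correct, and these pieces reassemble into the stated limit.

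However, two steps are genuine gaps. First, Theorem~\ref{thm:asymptotics_Rtorsion} does not ``furnish a product expression''; as stated it is already the asymptotic limit. The surgery-type formula you actually use, namely that up to factors bounded in $N$ the torsion equals $\det(\rho_{2N}(h)-\I_{2N})^{2g-2+n}\prod_j\det(\rho_{2N}(\ell_j)-\I_{2N})^{-1}$, is neither stated nor proved in this paper, so you are in effect re-proving the cited result while assuming its key computational input; the ``bookkeeping'' you defer (which elements enter and with which exponents) is the actual substance. Second, your primitivity claim is unjustified and, for the element you name, false in general: if the exceptional factor involved a lift $\tilde q_j$ of the cone loop $q_j$ itself, then for $\beta_j$ even and coprime to $\alpha_j$ (e.g.\ $(\alpha_j,\beta_j)=(3,2)$ or $(7,6)$, which occur in the paper's own Brieskorn example) the relation $q_j^{\alpha_j}=h^{\beta_j}$ gives $\rho(q_j)^{\alpha_j}=\I$, so $\rho(q_j)$ has odd order $\alpha_j=\lambda_j$, $\zeta_j^{\lambda_j}=+1$, and $\rho_{2N}(q_j)$ acquires the eigenvalue $1$; no appeal to ``acyclicity'' can force the other lift, because acyclicity constrains only the element that genuinely appears in the torsion formula. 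That element is the exceptional-fibre class $\ell_j$, and the statement that $\rho(\ell_j)$ has order exactly $2\lambda_j$ with $\lambda_j$ the order of $\bar\rho(q_j)$ --- equivalently that its eigenvalue is a primitive $2\lambda_j$-th root of unity (note that $\zeta^{\lambda_j}=-1$ alone does not yield the pairwise distinctness of the $\zeta^{r}$ that your computation needs) --- is exactly what the paper extracts from $\trho(h)=\shift(1)$ and $\trho(q_j)\sim\shift(\beta_j/\alpha_j)$ in Lemmas~\ref{lemma:h_to_shift_1}, \ref{lemma:q_j_to_b_j_a_j} and Proposition~\ref{prop:order_ell_j}. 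Supplying the product formula and this order computation would turn your sketch into a correct, more self-contained alternative; as written, the decisive points are assumed rather than proved.
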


\begin{remark*}
  The existence of a lift $\widetilde{\rho}$ is determined by
  the euler class $e(\bar\rho)$ in $H^2(\Gamma;\Z)$.
  The explicit value of $\lambda_j$ is given in Theorem~\ref{thm:main_I}.
  The limit of the leading coefficient is also determined by the euler class $e(\bar\rho)$.
\end{remark*}

The unit tangent bundle 
over a two--orbifold $\upperH^2 / \Gamma$
is also a Seifert manifold with the index
$((1, 2g-2), (\alpha_1, \alpha_1 -1), \ldots, (\alpha_n, \alpha_n -1))$.
We can think of the unit tangent bundle $T^1 (\upperH^2 / \Gamma)$ as $\PSLR / \Gamma$.
For a Seifert manifold $\PSLR / \Gamma$, 
we can take a $\TPSLR$-representation $\trho$ of $\pi_1(\PSLR / \Gamma)$ as 
a lift of an embedding from $\Gamma$ into $\PSLR$.

\begin{Corollary}[Corollary~\ref{cor:unit_tangent_bundle}]
  Suppose that an $\SLR$-representation $\rho$ of $\pi_1(\PSLR / \Gamma)$ is the composition of $\trho$
  with the projection from $\TPSLR$ onto $\SLR$.
  Then we have the following limit of the leading coefficient of
  $\log |\Tor{\PSLR / \Gamma}{\rho_{2N}}|$:
  \begin{align*}
    \lim_{N \to \infty}
    \frac{\log|\Tor{\PSLR / \Gamma}{\rho_{2N}}|
    }{2N}
    &= 
    - \Big(2 -2g - \sum_{j=1}^{n} \frac{\alpha_j - 1}{\alpha_j}\Big) \log 2\\
    &= - \chi \log 2
  \end{align*}
  where $\chi$ is the Euler characteristic of the base orbifold $\upperH^2 / \Gamma$.
\end{Corollary}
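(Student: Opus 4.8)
The plan is to obtain the Corollary as a direct specialization of Theorem~\ref{thm:main_I}, taking for $\bar\rho$ the inclusion of $\Gamma$ into $\PSLR$ and for the ambient Seifert manifold the unit tangent bundle $T^1(\upperH^2/\Gamma) \cong \PSLR/\Gamma$.

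First I would set up the data so that the hypotheses of Theorem~\ref{thm:main_I} are met. The unit tangent bundle $\PSLR/\Gamma$ is the Seifert manifold $\tangentbdle$, which is of the form $\Seifert$ with $b = 2g-2$ and $\beta_j = \alpha_j - 1$. The central extension $0 \to \Z \to \pi_1(\PSLR/\Gamma) \to \Gamma \to 1$ is the pullback of $0 \to \Z \to \TPSLR \to \PSLR \to 1$ along the inclusion $\Gamma \hookrightarrow \PSLR$; equivalently, the euler class $e(\bar\rho) \in H^2(\Gamma;\Z)$ of the inclusion is exactly the class classifying this Seifert fibration. Hence a lift $\trho\co \pi_1(\PSLR/\Gamma) \to \TPSLR$ of the inclusion exists by the Jankins--Neumann correspondence recalled in Section~\ref{sec:rep_Fuchsian}, and composing with the projection $\TPSLR \to \SLR$ produces the $\SLR$-representation $\rho$ of the statement. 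By construction $\rho$ sends a regular fiber to $-\I$, so Lemma~\ref{lemma:h_to_shift_1} and the asymptotic formula of Theorem~\ref{thm:asymptotics_Rtorsion} apply to the induced sequence $\rho_{2N}$, and in particular Theorem~\ref{thm:main_I} is applicable.

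Next I would evaluate the quantities $\lambda_j$ appearing in Theorem~\ref{thm:main_I}. Since $\Gamma = \Gamma(g;\alpha_1,\ldots,\alpha_n)$ is a cocompact Fuchsian group, the class $q_j$ of a small loop around the $j$-th cone point is a primitive elliptic element of rotation angle $2\pi/\alpha_j$, hence of order exactly $\alpha_j$ in $\PSLR$. As $\bar\rho$ is injective, $\lambda_j = \alpha_j$, so Theorem~\ref{thm:main_I} gives
\[
  \lim_{N\to\infty}\frac{\log|\Tor{\PSLR/\Gamma}{\rho_{2N}}|}{2N}
  = -\Big(2 - 2g - \sum_{j=1}^{n}\frac{\alpha_j - 1}{\alpha_j}\Big)\log 2 .
\]
Finally, by the definition of the orbifold Euler characteristic, $\chi(\upperH^2/\Gamma) = 2 - 2g - \sum_{j=1}^{n}\big(1 - \tfrac{1}{\alpha_j}\big) = 2 - 2g - \sum_{j=1}^{n}\tfrac{\alpha_j - 1}{\alpha_j}$, so the right-hand side equals $-\chi\log 2$, which is the asserted identity.

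The only step requiring genuine care is the bookkeeping in the second paragraph: checking that the Seifert invariants of $\PSLR/\Gamma$ are precisely $((1,2g-2),(\alpha_1,\alpha_1-1),\ldots,(\alpha_n,\alpha_n-1))$ and, crucially, that the euler number of this Seifert fibration agrees with $e(\bar\rho)$ for the inclusion, so that the normalization under which Theorem~\ref{thm:main_I} is stated is genuinely in force. Everything after this identification is a routine substitution together with the standard formula for $\chi(\upperH^2/\Gamma)$.
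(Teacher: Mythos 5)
Your proposal is correct and follows essentially the same route as the paper: it specializes Theorem~\ref{thm:main_I} to the unit tangent bundle with Seifert invariants $\beta_j=\alpha_j-1$, where the paper notes $(\alpha_j,\alpha_j-1)=1$ while you equivalently read off $\lambda_j=\alpha_j$ from the injectivity of $\bar\rho$, and both yield $-\chi\log 2$. The extra bookkeeping you flag (matching the euler class of the inclusion with the Seifert fibration of $\PSLR/\Gamma$) is exactly what the paper's remark preceding the corollary records, so nothing further is needed.
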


We also investigate which $\PSLR$-representation $\bar\rho$ of $\Gamma$ induces
$\SLR$-representation of $\pi_1(\Seifert)$.
We show a sufficient condition for $\bar\rho$ to be lifted 
to an $\SLR$-representation for a given Seifert manifold $\Seifert$
in the following diagram:
\[
\setlength{\arraycolsep}{2pt}
\begin{array}{ccccccccc}
  0 & \to & \Z & \to & \pi_1(\Seifert) & \to & \Gamma & \to & 1 \\
  &     & \downarrow &   & \; \downarrow \raisebox{1pt}{${\scriptstyle \rho}$} & & \; \downarrow \raisebox{2pt}{$\scriptstyle \bar\rho$} &  \\
  0 & \to & \Z / 2\Z& \to & \SLR & \to & \PSLR & \to & 1.
\end{array}
\]
Our sufficient condition is given 
in terms of the euler class of $\bar\rho$.
\begin{Theorem}[Theorem~\ref{prop:Z2extension_representation}]
  A $\PSLR$-representation $\bar\rho$ of $\Gamma$
  induces an $\SLR$-one of $\pi_1(\Seifert)$ such that $\rho(h)= -\I$
  if
  the euler class $e(\bar\rho)$ satisfies the criteria of
  Theorem~\ref{thm:criterion_JN}
  and gives the equivalent class
  $[b x_0 + \beta_1 x_1 + \cdots + \beta_n x_n]$ in $\Ext(\Gamma;\Z/2\Z)$.
\end{Theorem}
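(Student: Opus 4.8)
The plan is to produce $\rho$ as the composition of a $\TPSLR$-lift of $\bar\rho$ with the central projection $\TPSLR \to \SLR$, using the Jankins--Neumann hypothesis to get the lift and the cohomological hypothesis to make it descend to a representation of $\pi_1(\Seifert)$ itself sending the regular fiber $h$ to $-\I$. I would first lay out the three central extensions and their compatibility: besides $0 \to \Z \to \TPSLR \to \PSLR \to 1$ and $0 \to \Z/2\Z \to \SLR \to \PSLR \to 1$, the projection $\TPSLR \to \PSLR$ factors as $\TPSLR \to \SLR \to \PSLR$, so that $\TPSLR$ is the universal cover of $\SLR$ with kernel the index-two subgroup $2\Z$ of $\Z = \ker(\TPSLR \to \PSLR)$; in particular the generator of this $\Z$ projects to the nontrivial central element $-\I \in \SLR$.

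Next I would reformulate the target. A homomorphism $\rho\co \pi_1(\Seifert) \to \SLR$ covering $\bar\rho$ with $\rho(h) = -\I$ is the same as a homomorphism into $\SLR$ of the quotient $\pi_1(\Seifert)/\langle h^2 \rangle$ --- a central $\Z/2\Z$-extension of $\Gamma$ --- that covers $\bar\rho$ and carries the image of $h$ to $-\I$. Reducing the presentation of $\pi_1(\Seifert)$ modulo $h^2$ (the relations $q_j^{\alpha_j} h^{\beta_j} = 1$ and $[a_1,b_1]\cdots[a_g,b_g]\, q_1 \cdots q_n = h^{b}$ keep only the parities of the $\beta_j$ and of $b$), one identifies the class of this extension in $H^2(\Gamma;\Z/2\Z) = \Ext(\Gamma;\Z/2\Z)$ with $[b x_0 + \beta_1 x_1 + \cdots + \beta_n x_n]$; equivalently it is the reduction mod $2$ of the integral euler class carried by the central $\Z$-extension $\pi_1(\Seifert)$.

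Now the argument. By Theorem~\ref{thm:criterion_JN}, the hypothesis that $e(\bar\rho)$ satisfies the Jankins--Neumann criteria produces a lift $\trho$ of $\bar\rho$ to $\TPSLR$ on the central $\Z$-extension of $\Gamma$ determined by $e(\bar\rho)$, normalized to be the identity on that $\Z$ (as in the diagram of Section~\ref{sec:rep_Fuchsian}). Composing $\trho$ with $\TPSLR \to \SLR$ gives an $\SLR$-representation sending the generator of $\Z$ to $-\I$; it therefore factors through the central $\Z/2\Z$-extension of $\Gamma$ whose class is $e(\bar\rho) \bmod 2$, covering $\bar\rho$ and sending the distinguished central element to $-\I$. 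The cohomological hypothesis says exactly that $e(\bar\rho) \bmod 2$ equals $[b x_0 + \beta_1 x_1 + \cdots + \beta_n x_n]$, which by the previous paragraph is the class of $\pi_1(\Seifert)/\langle h^2 \rangle$. Hence the two $\Z/2\Z$-extensions of $\Gamma$ are isomorphic by an isomorphism that is the identity on $\Gamma$ and on $\Z/2\Z$, so it matches $h$ with $-\I$; transporting the representation across it and pulling back along $\pi_1(\Seifert) \twoheadrightarrow \pi_1(\Seifert)/\langle h^2\rangle$ yields the desired $\rho$, with $\rho(h) = -\I$.

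The main obstacle is not a single hard step but the cohomological bookkeeping: justifying $H^2(\Gamma;\Z/2\Z) = \Ext(\Gamma;\Z/2\Z)$ for the hyperbolic $2$-orbifold group $\Gamma$; pinning down, in the fixed basis $x_0, x_1, \ldots, x_n$, that the mod-$2$ euler class of $\pi_1(\Seifert)$ is $[b x_0 + \beta_1 x_1 + \cdots + \beta_n x_n]$ (and that this is invariant under the moves on Seifert invariants); and checking that an isomorphism of $\Z/2\Z$-extensions realizing an equality of classes is automatically compatible with the distinguished central elements, so that $h$ --- and not merely some element of $\langle h\rangle$ --- goes to $-\I$. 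One must also verify that the lift furnished by Theorem~\ref{thm:criterion_JN} can be taken identity on $\Z$, so that composing with $\TPSLR \to \SLR$ introduces no spurious sign; this is the part of the argument that genuinely uses the Jankins--Neumann hypothesis rather than the cohomological one.
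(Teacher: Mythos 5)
Your proposal is correct and follows essentially the same route as the paper: lift $\bar\rho$ to $\TPSLR$ over the central $\Z$-extension determined by $e(\bar\rho)$ (Theorem~\ref{thm:criterion_JN} and Lemma~\ref{lemma:h_to_shift_1}), project to $\SLR$ so the central generator goes to $-\I$, descend to the $\Z/2\Z$-extension $\pi_1(M')/\langle h'^2\rangle$, and use the equality of classes in $\Ext(\Gamma;\Z/2\Z)$ (via Lemma~\ref{lemma:ExtZ2}) to transport the representation to $\pi_1(\Seifert)/\langle h^2\rangle$ and pull back to $\pi_1(\Seifert)$. The bookkeeping points you flag are exactly what the paper's Lemma~\ref{lemma:ExtZ2} and the pull-back definition of the euler class supply.
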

Moreover the limit of the leading coefficient for $\Tor(\Seifert;\rho_{2N})$
is expressed in Theorem~\ref{thm:asymptotic_another_SLR_rep}.
We will compute explicit examples for $\SLR$-representations of
Brieskorn manifolds by using Theorem~\ref{prop:Z2extension_representation}

\subsection*{Organization}
We review the previous result on the asymptotic
behavior of the Reidemeister torsion in Section~\ref{sec:review_asymptotics}.
Section~\ref{sec:rep_Fuchsian} gives a brief review the work on 
$\PSLR$-representations of a cocompact Fuchsian group by 
Jankins and Neumann~\cite{JankinsNeumannHomomorphisms}.
In Section~\ref{sec:asymptotics_SL2R},
We observe the asymptotic behavior of the Reidemeister torsion
for a Seifert manifold and the $\SLR$-representation induced by
a $\TPSLR$-one.
We deal with $\SLR$-representations for a Seifert manifold,
which are given by $\PSLR$-representations of a Fuchsian group with different euler classes
in Section~\ref{sec:Z_2_extensions}.
The last Section~\ref{sec:examples} shows explicit examples of 
$\SLR$-representations induced by $\TPSLR$-ones for 
Brieskorn manifolds.

\section{Preliminaries}
\label{sec:preliminaries}
For a Seifert index $(g; (1, b), (\alpha_1, \beta_1), \ldots, (\alpha_n, \beta_n))$,
we follow the convention of Jankins and Neumann~\cite{JankinsNeumannHomomorphisms}.
This notation differs from that of~\cite{Yamaguchi:asymptoticsRtorsion}
in sign of $b$ and $\beta_1, \ldots, \beta_n$.

\subsection{Asymptotic behavior of the Reidemeister torsion}
\label{sec:review_asymptotics}
Let $\mfd$ be a Seifert manifold of the index
$(g; (1, b), (\alpha_1, \beta_1), \ldots, (\alpha_n, \beta_n))$.
The fundamental group of $\mfd$ is expressed as
\[
\pi_1(\mfd)
=
\langle
a_1, b_1, \ldots, a_g, b_g, q_1, \ldots, q_n, h \,|\,
\hbox{$h$ is central}, q_j^{\alpha_j} = h^{\beta_j}, q_1 \cdots q_n \prod_{i=1}^g [a_i, b_i] = h^{-b}
\rangle.
\]
We use the symbol $\rho$ for an $\SLC$-representation of $\pi_1(\mfd)$.
We denote by $\rho_{2N}$ the composition of $\rho$ with
the irreducible $2N$--dimensional representation of $\SLC$
and by $\Tor{\mfd}{\rho_{2N}}$ the Reidemeister torsion of $\mfd$ and $\rho_{2N}$. 
The asymptotic behavior of the Reidemeister torsion is expressed as follows.
\begin{theorem}[Theorem~$4.5$ in~\cite{Yamaguchi:asymptoticsRtorsion}]
  \label{thm:asymptotics_Rtorsion}
  If $\pi_1(\mfd)$ has an $\SLC$-representation $\rho$
  sending the homotopy class $h$ of a regular fiber to $-\I$,
  the asymptotic behavior of the sequence given by the Reidemeister torsion $\Tor{\mfd}{\rho_{2N}}$
  is expressed as 
  \begin{align}
    \lim_{N \to \infty}
    \frac{\log|\Tor{\mfd}{\rho_{2N}}|
    }{(2N)^2}
    &= 0,  \notag\\
    \label{eqn:asymptotic_Rtorsion}
    \lim_{N \to \infty}
    \frac{\log|\Tor{\mfd}{\rho_{2N}}|
    }{2N}
    &= 
    - \Big(2 -2g - \sum_{j=1}^{n} \frac{\lambda_j - 1}{\lambda_j}\Big) \log 2.
  \end{align}
  The right hand side of \eqref{eqn:asymptotic_Rtorsion} is determined by the orders of the $\SLC$-matrices for 
  the exceptional fibers. When we denote by $\ell_j$ the homotopy class of $j$-th exceptional fiber,
  each $\lambda_j$ is half the order of $\rho(\ell_j)$.
\end{theorem}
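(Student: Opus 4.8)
The plan is to cut $\mfd$ along vertical tori, evaluate the torsion of each piece, and read off the exponential rate in $2N$. First I would write $\mfd$ as the union of the product piece $\mfd_{0}=\Sigma_{g,n}\times S^{1}$, the trivial circle bundle over the $n$-holed genus-$g$ base surface (the $(1,b)$ framing being absorbed into the gluing maps, or equivalently into an auxiliary multiplicity-one solid torus that will turn out not to affect the answer), together with the $n$ fibered solid tori $V_{1},\dots,V_{n}$ around the exceptional fibers, glued along the boundary tori $T_{1},\dots,T_{n}$. Since $\rho(h)=-\I$ and $2N-1$ is odd, the irreducible $2N$-dimensional representation of $\SLC$ sends $-\I$ to $-\I_{2N}$, so $\rho_{2N}(h)=-\I_{2N}$ is scalar; hence $\rho_{2N}$ has no invariant vector on $\pi_{1}(T_{j})=\langle q_{j},h\rangle$ or on the $S^{1}$-factor of $\mfd_{0}$, which forces $H_{\ast}(T_{j};\rho_{2N})=0$ and $H_{\ast}(\mfd_{0};\rho_{2N})=0$, while $H_{\ast}(V_{j};\rho_{2N})=0$ holds once $\det(\I-\rho_{2N}(\ell_{j}))\neq 0$ (checked below). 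Milnor's multiplicativity of torsion over the resulting Mayer--Vietoris pairs, with vanishing correction terms because all pieces are acyclic, then yields
\[
  \Tor{\mfd}{\rho_{2N}}\;=\;\pm\,\Tor{\mfd_{0}}{\rho_{2N}}\cdot\prod_{j=1}^{n}\Tor{V_{j}}{\rho_{2N}}\cdot\prod_{j=1}^{n}\Tor{T_{j}}{\rho_{2N}}^{-1}.
\]

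I would then evaluate the three kinds of factors, using crucially that $\rho_{2N}(h)$ is scalar. The twisted complex of $T_{j}=S^{1}\times S^{1}$ is acyclic, and the product formula for Reidemeister torsion applied with the fibre circle $\langle h\rangle$, together with $\chi(S^{1})=0$, gives $\Tor{T_{j}}{\rho_{2N}}=\pm 1$. The twisted complex of $\mfd_{0}=\Sigma_{g,n}\times S^{1}$ splits over $\C$ as $C_{\ast}(\Sigma_{g,n};\rho_{2N})\otimes C_{\ast}(S^{1};\varepsilon)$, where $\varepsilon$ is the character with $\varepsilon(h)=-1$; the $\varepsilon$-factor is acyclic with torsion of absolute value $\tfrac12$, and since $\chi\big(C_{\ast}(\Sigma_{g,n};\rho_{2N})\big)=2N\,\chi(\Sigma_{g,n})=2N(2-2g-n)$, the product formula gives $\big|\Tor{\mfd_{0}}{\rho_{2N}}\big|=2^{-2N(2-2g-n)}$. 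Finally $V_{j}$ retracts onto its core $\ell_{j}$, so $\big|\Tor{V_{j}}{\rho_{2N}}\big|=\big|\det(\I-\rho_{2N}(\ell_{j}))\big|^{-1}$.

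The remaining point is to control $\det(\I-\rho_{2N}(\ell_{j}))$. Writing $\ell_{j}=q_{j}^{\gamma_{j}}h^{\delta_{j}}$ with $\alpha_{j}\delta_{j}-\beta_{j}\gamma_{j}=1$ and using $q_{j}^{\alpha_{j}}=h^{\beta_{j}}$ and the centrality of $h$, one gets $\rho(\ell_{j})^{\alpha_{j}}=\rho(h)^{\beta_{j}\gamma_{j}+\alpha_{j}\delta_{j}}=\rho(h)^{1+2\beta_{j}\gamma_{j}}=-\I$; thus $\rho(\ell_{j})$ is diagonalizable, conjugate to $\mathrm{diag}(\mu_{j},\mu_{j}^{-1})$ with $\mu_{j}$ a root of unity of even order $2\lambda_{j}$, where $\lambda_{j}$ is by definition half the order of $\rho(\ell_{j})$ (so $\lambda_{j}\mid\alpha_{j}$). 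Then $\rho_{2N}(\ell_{j})$ has eigenvalues $\mu_{j}^{2N-1-2k}$, $k=0,\dots,2N-1$; these exponents are odd, hence never $\equiv 0\pmod{2\lambda_{j}}$, so $1$ is not an eigenvalue (this is the acyclicity of $V_{j}$). Modulo $2\lambda_{j}$ the exponents run through exactly $\lambda_{j}$ residues, each attained $2N/\lambda_{j}+O(1)$ times, and the corresponding odd powers of $\mu_{j}$ form the coset $\mu_{j}\langle\mu_{j}^{2}\rangle$ of the group of $\lambda_{j}$-th roots of unity; a short cyclotomic computation, using $\mu_{j}^{\lambda_{j}}=-1$, then gives
\[
  \prod_{\substack{r\ \mathrm{odd}\\ 0<r<2\lambda_{j}}}\big(\mu_{j}^{r}-1\big)\;=\;(-1)^{\lambda_{j}}\big(1-\mu_{j}^{\lambda_{j}}\big)\;=\;\pm 2 .
\]
Hence $\dfrac{1}{2N}\log\big|\det(\I-\rho_{2N}(\ell_{j}))\big|\longrightarrow\dfrac{1}{\lambda_{j}}\log 2$, with an $O(1)$ error arising only from the last incomplete block of exponents.

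Assembling the three contributions,
\[
  \frac{1}{2N}\log\big|\Tor{\mfd}{\rho_{2N}}\big|\;\longrightarrow\;-(2-2g-n)\log 2-\sum_{j=1}^{n}\frac{1}{\lambda_{j}}\log 2\;=\;-\Big(2-2g-\sum_{j=1}^{n}\frac{\lambda_{j}-1}{\lambda_{j}}\Big)\log 2 ,
\]
and since each factor above contributes only $O(2N)$ to $\log\big|\Tor{\mfd}{\rho_{2N}}\big|$, normalization by $(2N)^{2}$ gives the limit $0$. I expect the main obstacle to be the first step: pinning down the gluing formula with the correct normalization and the vanishing of the Mayer--Vietoris corrections, and in particular computing the twisted torsion of $\Sigma_{g,n}\times S^{1}$ and verifying that the $(1,b)$ framing genuinely washes out of the answer. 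Once the product formula and the acyclicity statements are in place, what remains is the elementary identity $\prod_{r\ \mathrm{odd}}(\mu_{j}^{r}-1)=\pm 2$ and bookkeeping of the $O(1)$ errors.
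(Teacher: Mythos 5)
Your plan is essentially the argument behind the cited result: the paper itself only quotes Theorem~4.5 of the author's earlier work, whose proof likewise rests on the Seifert decomposition into a circle bundle over $\Sigma_{g,n}$ plus fibered solid tori, Milnor multiplicativity with all pieces $\rho_{2N}$-acyclic (using $\rho_{2N}(h)=-\I_{2N}$), the resulting formula $|\Tor{\mfd}{\rho_{2N}}|=|\det(\rho_{2N}(h)-\I)|^{-(2-2g-n)}\prod_j|\det(\rho_{2N}(\ell_j)-\I)|^{-1}$, and the root-of-unity estimate giving growth $2^{2N/\lambda_j}$ per exceptional fiber. Your bookkeeping (scalar action of $h$, the coset $\mu_j\langle\mu_j^2\rangle$ computation yielding $\pm2$ per block, the $O(1)$ incomplete block, and the cancellation of the auxiliary $(1,b)$ torus) is correct, the only residual point being the fixed sign/inversion convention for torsion, which you use consistently and which matches the stated limit.
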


\subsection{$\PSLR$-representations of Fuchsian groups and the euler classes}
\label{sec:rep_Fuchsian}
We use the symbol $\Gamma = \Gamma(g; \alpha_1, \ldots, \alpha_n)$
for a cocompact Fuchsian group of genus $g$ with 
branch indices $\alpha_1, \ldots, \alpha_n$.
The Fuchsian group $\Gamma$ has the following presentation:
\begin{equation}
  \label{eqn:presentation_orbifold_fgroup}
  \Gamma =
  \langle
  a_1, b_1, \ldots, a_g, b_g, q_1, \ldots, q_n \,|\,
  q_j^{\alpha_j} = 1, q_1 \cdots q_n \prod_{i=1}^g [a_i, b_i] = 1
  \rangle.
\end{equation}
In~\cite{JankinsNeumannHomomorphisms}, Jankins and Neumann determined the set of components in $\Hom(\Gamma, \PSLR)$
by an euler class:
\[
e:\Hom(\Gamma, \PSLR) \to
H^2(\Gamma;\Z) = ab\langle x_0, \ldots, x_n \,|\, \alpha_i x_i = x_0, i=1, \ldots, n \rangle
\]
The Euler class $e$ is defined as follows.
Taking the pull--back central extension for $f \in \Hom(\Gamma, \PSLR)$ from 
\[
\setlength{\arraycolsep}{2pt}
\begin{array}{ccccccccc}
  & & & & & & \Gamma &  &  \\
  & & & & & & \quad \downarrow \raisebox{2pt}{$\scriptstyle f$} &  \\
  0 & \to & \Z & \to & \TPSLR & \to & \PSLR & \to & 1,
\end{array}
\]
we have the following commutative diagram:
\begin{equation}
  \setlength{\arraycolsep}{2pt}
  \begin{array}{ccccccccc}
    0 & \to & \Z & \to &\univcover{\Gamma}& \to & \Gamma & \to & 1 \\
    &     & || &   & \quad \downarrow \raisebox{2pt}{$\scriptstyle \widetilde{f}$} & & \quad \downarrow \raisebox{2pt}{$\scriptstyle f$} &  \\
    0 & \to & \Z & \to & \TPSLR & \to & \PSLR & \to & 1.
  \end{array}
\end{equation}
The central extension $\univcover{\Gamma}$ has a presentation:
\[\langle
a_1, b_1, \ldots, a_g, b_g, q_1, \ldots, q_n, h \,|\,
\hbox{$h$ is central}, q_j^{\alpha_j} = h^{\beta_j}, q_1 \cdots q_n \prod_{i=1}^g [a_i, b_i] = h^{-b}
\rangle\]
with $0 \leq \beta_i < \alpha_i$.
By the presentation of $\univcover{\Gamma}$,
we define $e(f)$ to be $b x_0 + \beta_1 x_1 + \cdots + \beta_n x_n$.

\begin{theorem}[Theorem~$1$ in~\cite{JankinsNeumannHomomorphisms}]
  \label{thm:criterion_JN}
  Suppose that $x \in H^2(\Gamma;\Z)$ satisfies   
  $x = bx_0 + \beta_1 x_1 + \cdots + \beta_n x_n$ ($0 \leq \beta_i < \alpha_i$).
  There exists some $f$ such that $e(f) = x$
  if and only if
  the following holds:
  \begin{enumerate}
  \item \label{item:g_geq_0}
    If $g > 0$ then $2 - 2g -n \leq b \leq 2g - 2$;
  \item \label{item:g_eq_0}
    If $g=0$ then either
    \begin{enumerate}
    \item $2 - n \leq b \leq -2$ or;
    \item \label{item:b_eq_minus_one}
      $b = -1$ and $\sum_{j=1}^n (\beta_j / \alpha_j) \leq 1$ or;
    \item \label{item:b_eq_one_minus_n}
      $b = 1 - n$ and $\sum_{j=1}^n (\beta_j / \alpha_j) \geq n-1$.
    \end{enumerate}
  \end{enumerate}
\end{theorem}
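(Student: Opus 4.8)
The plan is to prove both directions by transporting the realizability question into the universal central extension $\TPSLR$ and controlling it with the translation number. Recall that $\TPSLR$ acts on $\R$ (the universal cover of $\partial\upperH^2 = S^1$) and carries the homogeneous quasimorphism $\tau\co\TPSLR \to \R$ (Poincar\'e rotation number), normalized so that the central generator $z$ has $\tau(z)=1$; it is conjugation invariant, satisfies $\tau(\gamma^m)=m\tau(\gamma)$, and has defect $1$, i.e. $|\tau(\gamma\delta)-\tau(\gamma)-\tau(\delta)|\le 1$. Since $\TPSLR$ is torsion--free, a lift $\widetilde f$ of $f$ is determined on generators by elements $\tilde a_i,\tilde b_i,\tilde q_j$, and the two relations of $\Gamma$ force $\tilde q_j^{\alpha_j}=z^{\beta_j}$ and $\tilde q_1\cdots\tilde q_n\prod_{i=1}^g[\tilde a_i,\tilde b_i]=z^{-b}$ for integers $\beta_j,b$; after normalizing $0\le\beta_j<\alpha_j$ these are exactly the data defining $e(f)=bx_0+\sum_j\beta_j x_j$. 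Thus I must characterize when such a factorization of $z^{-b}$ exists, subject to $\tau(\tilde q_j)=\beta_j/\alpha_j\in[0,1)$.

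First I would record the elementary constraints. Applying $\tau$ to $\tilde q_j^{\alpha_j}=z^{\beta_j}$ gives $\tau(\tilde q_j)=\beta_j/\alpha_j$, so each elliptic factor contributes a rotation number in $[0,1)$, and $\beta_j=0$ forces $\tilde q_j=1$ by torsion--freeness. For the main relation I would apply $\tau$ and use the sharp Milnor--Wood bound: a product of $g$ commutators in $\TPSLR$ equals a central element $z^{k}$ exactly when $|k|\le 2g-2$ (for $g\ge1$; the empty product forces $k=0$ when $g=0$), and more generally its translation number lies in the interval governed by $2g-2$. Combining this with $\sum_j\tau(\tilde q_j)=\sum_j\beta_j/\alpha_j\in[0,n)$ and the integrality of $b$, a careful bookkeeping of the quasimorphism defect yields the necessity of the stated ranges: the upper bound $b\le 2g-2$ is the closed--surface inequality (take all $\beta_j=0$), while the elliptic factors shift the lower bound down to $2-2g-n$. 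For $g=0$ there are no commutators, so $\prod_j\tilde q_j=z^{-b}$ is a pure product of rotations, and this is where the coarse bound must be refined.

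The sufficiency I would establish by construction. For $g\ge1$ I would first pick elliptic lifts $\tilde q_j$ with the prescribed rotation numbers $\beta_j/\alpha_j$, compute the residual central element that the commutator block must produce, and then invoke the realizability of any central $z^{k}$ with $|k|\le 2g-2$ as a product of $g$ commutators to close up the relation; the width of this commutator range is exactly what makes every $b$ in $[\,2-2g-n,\,2g-2\,]$ attainable. For $g=0$ the representation is built directly in $\PSLR$ by choosing $n$ elliptic elements with the given rotation angles whose cyclic product is the identity (equivalently, whose lifts multiply to $z^{-b}$), which reduces to a configuration problem for the fixed points of rotations in $\upperH^2$, solved by a continuity/intermediate--value argument as the fixed points are moved apart.

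The main obstacle is exactly the two boundary cases in genus $0$, namely $b=-1$ and $b=1-n$, where the defect--$1$ quasimorphism estimate is not sharp and the plain interval $2-n\le b\le -2$ must be extended only under the extra hypotheses $\sum_j\beta_j/\alpha_j\le 1$ and $\sum_j\beta_j/\alpha_j\ge n-1$. Here I would replace the quasimorphism bound by the exact geometry of products of elliptic isometries of $\upperH^2$: a chain of positive rotations closes up to the central element $z$ (respectively $z^{\,n-1}$) precisely when its total rotation number can reach the degenerate, parabolic value, which is governed by the angle--sum thresholds $1$ and $n-1$. Making this closing--up criterion precise --- the content of the Eisenbud--Hirsch--Neumann refinement underlying Jankins and Neumann's theorem --- is the technical heart, and once it is in place both the necessity of the sum conditions at the extremes and their sufficiency for the explicit construction follow.
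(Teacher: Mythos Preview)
The paper does not prove this theorem: it is quoted as Theorem~$1$ of Jankins--Neumann and used throughout as an imported result, with no argument supplied. There is therefore no proof in the paper to compare your proposal against.

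Taken on its own, your outline follows the standard route --- lift to $\TPSLR$, read off $\tau(\tilde q_j)=\beta_j/\alpha_j$, control the commutator block by the Milnor--Wood/Goldman realizability statement, and handle the genus--zero product of elliptics by hyperbolic geometry --- and you correctly locate the difficulty in the two extremal values $b=-1$ and $b=1-n$ when $g=0$. The gap is that you do not actually prove anything there: you invoke an ``Eisenbud--Hirsch--Neumann closing--up criterion'' by name and declare that necessity and sufficiency follow, without stating the criterion or carrying out either direction. That step is precisely the content of the Jankins--Neumann argument; the defect--$1$ bound on $\tau$ alone only gives the coarse interval and cannot see the angle--sum thresholds $\sum_j\beta_j/\alpha_j\le 1$ and $\ge n-1$. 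So the proposal is a sound plan whose hardest lemma has been deferred rather than proved. A minor additional point: your derivation of the $g>0$ range from ``quasimorphism bookkeeping'' is sketched rather than done, and in particular the lower bound $2-2g-n$ needs the construction side (choosing elliptic lifts with $\tau=\beta_j/\alpha_j$ close to $1$ and then absorbing the residue in the commutator block), not just the inequality.
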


\begin{remark}
The above definition of the euler class $e$ is
the alternative one used in the proof of~\cite[Theorem~$1$]{JankinsNeumannHomomorphisms}.
The euler class $e$ is defined as $e(f) = f^*(c)$ where 
$f^* : H^2(B\PSLR;\Z) \to H^2(B\Gamma;\Z)=H^2(\Gamma;\Z)$ is the induced homomorphism by $f$
and $c$ is a generator of $H^2(B\PSLR;\Z) \simeq \Z$.
\end{remark}

Jankins and Neumann also proved the following theorem
on the components of the $\PSLR$-representation space
of a Fuchsian group.
\begin{theorem}[Theorem~$2$ in \cite{JankinsNeumannHomomorphisms}]
\label{thm:components_fuchsian}
Let $\Gamma$ be a cocompact Fuchsian group.
The fibers of $e$ are the components of $\Hom(\Gamma, \PSLR)$.
\end{theorem}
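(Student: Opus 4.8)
The plan is to prove the two halves of the stated bijection separately. Writing $e$ for the Euler class map, I will show (I) $e$ is locally constant, so that each connected component of $\Hom(\Gamma,\PSLR)$ lies in a single fiber; and (II) every nonempty fiber of $e$ is connected, so that conversely each fiber lies in a single component. Together these identify the two partitions of $\Hom(\Gamma,\PSLR)$. Part (I) is formal: for $f$ in a small neighborhood of a fixed $f_0$, choose lifts $\widetilde{f(a_i)},\widetilde{f(b_i)},\widetilde{f(q_j)}\in\TPSLR$ depending continuously on $f$ (possible since $\TPSLR\to\PSLR$ is a covering); then $\widetilde{f(q_j)}^{\,\alpha_j}$ and $\widetilde{f(q_1)}\cdots\widetilde{f(q_n)}\prod_i[\widetilde{f(a_i)},\widetilde{f(b_i)}]$ lie in the discrete central subgroup $\Z\subset\TPSLR$ and vary continuously with $f$, hence are locally constant; by the definition of $e$ through the presentation of $\univcover{\Gamma}$ this forces $e$ to be locally constant, and a locally constant map is constant on each connected component.

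For part (II), fix $x=bx_0+\beta_1x_1+\cdots+\beta_nx_n$ (with $0\le\beta_j<\alpha_j$) in the image of $e$, so that the criterion of Theorem~\ref{thm:criterion_JN} holds, and take $f\in e^{-1}(x)$. The computation in (I) shows that along $e^{-1}(x)$ the conjugacy class of each $f(q_j)$ is fixed: it is the class $C_j\subset\PSLR$ of the elliptic rotation by $2\pi\beta_j/\alpha_j$ about a basepoint of $\upperH^2$, with $C_j=\{\I\}$ exactly when $\beta_j=0$. Since each $C_j$ is connected (a point, or $\PSLR/\mathrm{SO}(2)\simeq\upperH^2$), I would first deform $f$ inside $e^{-1}(x)$ so that $f(q_j)$ equals a chosen standard representative $R_j\in C_j$ for all $j$, correcting the $f(a_i),f(b_i)$ so that the long relation survives (the evaluation map $\PSLR^{2g}\times\prod_j C_j\to\PSLR$, $(A_i,B_i,Q_j)\mapsto Q_1\cdots Q_n\prod_i[A_i,B_i]$, is a submersion away from a lower-dimensional reducible stratum, where $f(\Gamma)$ lies in a one-parameter subgroup, and that stratum is handled separately). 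After this normalization $e^{-1}(x)$ becomes, up to the contractible ambiguities of the overall conjugation and of the chosen paths inside the $C_j$, precisely the subset of
\[
\mu^{-1}(1),\qquad \mu\co \PSLR^{2g}\times C_1\times\cdots\times C_n\to\PSLR,\quad (A_i,B_i,Q_j)\longmapsto Q_1\cdots Q_n\prod_{i=1}^g[A_i,B_i],
\]
on which the central element $\widetilde{Q_1}\cdots\widetilde{Q_n}\prod_i[\widetilde{A_i},\widetilde{B_i}]\in\Z\subset\TPSLR$ computed from lifts takes the value corresponding to $b$. Thus (II) reduces to showing that the components of $\mu^{-1}(1)$ are separated exactly by this integer, and that for each admissible $b$ the corresponding subset is connected.

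This last reduction is the orbifold analogue of Goldman's theorem that the Euler number is a complete invariant of the components of $\Hom(\pi_1\Sigma_g,\PSLR)$, and I expect it to be the main obstacle. I would attack it by an induction that peels off the factors $Q_j$ and $[A_i,B_i]$ one at a time: using the classical fact that, for fixed elliptic conjugacy classes $C,C'$, the product set $\{(P,Q)\in C\times C':PQ=S\}$ is connected whenever it is nonempty (governed by the triangle inequalities for rotation angles together with the topology of $\PSLR$), one merges $Q_1,\dots,Q_n$ into a single factor while tracking how the lift, hence $b$, changes and which $b$ remain attainable; the handle factors $[A_i,B_i]$ are controlled by Milnor's inequality and by the connectivity of $\{(A,B):[A,B]=T\}$. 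The bookkeeping must reproduce precisely the inequalities of Theorem~\ref{thm:criterion_JN}, so that the count of admissible $b$ matches the number of components; for this reason it is cleanest to establish Theorem~\ref{thm:criterion_JN} and the present statement together. The points I expect to be delicate are the behavior near reducible representations, where conjugation is no longer locally free and one must rule out spurious components, and the sharpness of the per-handle Milnor--Wood bound, which is exactly what makes the fiber count finite and equal to the size of the admissible range of $b$.
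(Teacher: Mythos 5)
This statement is quoted verbatim from Jankins--Neumann (their Theorem~2); the paper gives no proof of it, so the only question is whether your blind argument would actually establish the result. Part (I) of your plan (local constancy of $e$, via continuous choices of lifts to $\TPSLR$ and discreteness of the center) is correct and standard. The genuine gap is part (II): the connectedness of each nonempty fiber is the entire content of the theorem, and what you offer for it is a programme rather than a proof. Each of the steps you lean on is itself a statement of essentially the same depth as the theorem: (a) the claim that the relation map on $\PSLR^{2g}\times C_1\times\cdots\times C_n$ is a submersion away from a reducible stratum that can be ``handled separately'' --- ruling out spurious components in or near the reducible locus is precisely one of the delicate points in Goldman's and Jankins--Neumann's arguments, and you only flag it; (b) the ``classical fact'' that $\{(P,Q)\in C\times C' : PQ=S\}$ is connected whenever nonempty is asserted without proof, without precise hypotheses (which $S$, which angle ranges), and without the accompanying control of how the integer lift changes under the merging step, which is exactly the relative-connectivity-with-rotation-number bookkeeping the induction needs; (c) the requirement that this bookkeeping ``reproduce precisely the inequalities of Theorem~\ref{thm:criterion_JN}'' --- i.e.\ the sharp Milnor--Wood-type bounds per handle and per cone point --- is the substance of the Jankins--Neumann paper and is left undone (you yourself note it is cleanest to prove both theorems together, which concedes that neither is proved here). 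As written, the proposal reduces the theorem to a collection of unproven claims that jointly are the theorem.

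Two smaller inaccuracies: the ambiguity of overall conjugation is not ``contractible'' ($\PSLR$ is homotopy equivalent to a circle), though connectedness of $\PSLR$ is all you actually need there; and the normalization step ``deform $f$ inside $e^{-1}(x)$ so that $f(q_j)=R_j$, correcting the $f(a_i),f(b_i)$'' presupposes a path-lifting property for the relation map that again fails at reducibles and therefore cannot be taken for granted at the outset. If you want a complete argument along these lines you would need to carry out the relative version of Goldman's component theorem with prescribed elliptic conjugacy classes in full detail; otherwise the correct course is simply to cite Jankins--Neumann, as the paper does.
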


\section{Asymptotics of the Reidemeister torsion via $\TPSLR$-representations}
\label{sec:asymptotics_SL2R}
In the observation of the asymptotic behavior for the Reidemeister torsion,
we require that an $\SLC$-representation $\rho$ sends the central element $h$ to $-\I$.
Since $\pi_1(\mfd)$ is a central extension of $\Gamma$, 
this is equivalent to that we have the following commutative diagram:
\[
\setlength{\arraycolsep}{2pt}
\begin{array}{ccccccccc}
0 & \to & \Z & \to & \pi_1(\mfd) & \to & \Gamma & \to & 1 \\
  & & \downarrow & & \downarrow \rho & & \quad \downarrow \bar\rho &  \\
0 & \to & \{\pm I\} & \to & \SLC & \to & \PSLC & \to & 1.
\end{array}
\]

The limit~\eqref{eqn:asymptotic_Rtorsion} in Theorem~\ref{thm:asymptotics_Rtorsion}
is determined by the induced $\PSLC$-representation $\bar\rho$ of $\Gamma$.
\begin{proposition}
  \label{prop:order_ell_j}
  The integer $\lambda_j$ in the equation~\eqref{eqn:asymptotic_Rtorsion} coincides with
  the order of $\bar\rho(q_j)$.
\end{proposition}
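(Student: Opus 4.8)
The plan is to express the homotopy class $\ell_j$ of the $j$-th exceptional fiber through the generators $q_j$ and $h$, and then to translate the asserted equality of orders into elementary arithmetic of finite cyclic groups. First I would recall the local picture near $\ell_j$: the $j$-th exceptional fiber has a fibered solid torus neighborhood $V_j$ with $\pi_1(V_j)$ infinite cyclic generated by $\ell_j$, the regular fiber $h$ being homotopic in $V_j$ to $\ell_j^{\alpha_j}$ (a regular fiber winds $\alpha_j$ times around the core), while the curve $q_j$ lies on $\partial V_j$ and, since $\{q_j,h\}$ is a basis of $H_1(\partial V_j)$ mapping onto $H_1(V_j)\cong\Z$, satisfies $q_j=\ell_j^{\gamma_j}$ in $\pi_1(V_j)$ for an integer $\gamma_j$ with $\gcd(\gamma_j,\alpha_j)=1$ (concretely $\gamma_j\equiv\pm\beta_j$ for a suitable orientation of $\ell_j$, but only the coprimality with $\alpha_j$ is used below). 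These relations hold inside $\pi_1(\mfd)$ via $\pi_1(V_j)\to\pi_1(\mfd)$, and the conjugacy ambiguity in identifying $\ell_j$ as an element of $\pi_1(\mfd)$ is immaterial since it does not affect the orders at issue.

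Next, put $A:=\rho(\ell_j)\in\SLC$ and let $\bar A$ denote its image in $\PSLC$. The hypothesis $\rho(h)=-\I$ gives $A^{\alpha_j}=-\I$, hence $\bar A^{\alpha_j}=\I$; let $k$ be the order of $\bar A$, so that $k\mid\alpha_j$. I claim $A^{k}=-\I$: by definition of $k$ we have $A^{k}\in\{\pm\I\}$, and if $A^{k}=\I$ then, using $k\mid\alpha_j$, we would obtain $A^{\alpha_j}=\I\neq-\I$. Therefore the order of $A$ in $\SLC$ is exactly $2k$; in particular it is even, so the phrase ``half the order of $\rho(\ell_j)$'' in Theorem~\ref{thm:asymptotics_Rtorsion} makes sense here, and it gives $\lambda_j=k$.

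It remains to identify $k$ with the order of $\bar\rho(q_j)$. By commutativity of the diagram defining $\bar\rho$ together with the relation $q_j=\ell_j^{\gamma_j}$ in $\pi_1(\mfd)$, we get $\bar\rho(q_j)=\bar A^{\gamma_j}$, an element of $\PSLC$ of order $k/\gcd(k,\gamma_j)$. Since $k\mid\alpha_j$ and $\gcd(\gamma_j,\alpha_j)=1$, also $\gcd(k,\gamma_j)=1$, so $\bar\rho(q_j)$ has order $k=\lambda_j$, which is the assertion.

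The one point needing care is the first step, namely matching the relation among $\ell_j$, $q_j$ and $h$ to the Jankins--Neumann sign conventions; but this reduces to the two standard facts that $h=\ell_j^{\alpha_j}$ in $\pi_1(V_j)$ and that $q_j$ maps to a generator of $\pi_1(V_j)/\langle h\rangle\cong\Z/\alpha_j$, after which everything is a computation in cyclic groups. (It is worth noting that the orientation sign in $\gamma_j=\pm\beta_j$ and the precise value of $\gamma_j$ are irrelevant, since only $\gcd(\gamma_j,\alpha_j)=1$ enters the argument.)
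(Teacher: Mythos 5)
Your proof is correct, and its central computation is the same one the paper uses: with $\rho(h)=-\I$, half the order of $\rho(\ell_j)$ in $\SLC$ equals the order of its image in $\PSLC$. Where you genuinely go further is the bridge from $\ell_j$ to $q_j$: the paper's proof argues only that $\lambda_j$ equals the order of $\bar\rho(\ell_j)$ and leaves the passage to $\bar\rho(q_j)$ implicit, whereas you make it explicit via the fibered solid torus relations $h=\ell_j^{\alpha_j}$ and $q_j=\ell_j^{\gamma_j}$ with $\gcd(\gamma_j,\alpha_j)=1$ (from surjectivity of $H_1(\partial V_j)\to H_1(V_j)$), which gives $\bar\rho(q_j)=\bar\rho(\ell_j)^{\gamma_j}$ of the same order $k$ since $k\mid\alpha_j$ forces $\gcd(k,\gamma_j)=1$. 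Your version is therefore slightly more complete than the paper's, and your decision to rely only on coprimality of $\gamma_j$ with $\alpha_j$ (rather than pinning down $\gamma_j=\pm\beta_j$ and the sign conventions) is exactly the right way to keep the argument convention-independent; the only step stated without detail, that $A^{k}=-\I$ together with $\mathrm{ord}(\bar A)=k$ forces $\mathrm{ord}(A)=2k$, is a routine fact about the two-to-one map $\langle A\rangle\to\langle\bar A\rangle$ and is fine as written.
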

\begin{proof}
  The order of $\rho(\ell_j)$ is equal to $2\lambda_j$. This means that 
  $\lambda_j$ is the minimum of natural numbers such that $\rho(\ell_j)^\lambda = -\I$.
  On the other hand, the order of $\bar\rho(\ell_j)$ in $\PSLC$ is the minimum of natural numbers 
  such that $\rho(\ell_j) = \pm \I$. 
  Hence the order of $\bar\rho(\ell_j)$ is equal to $\lambda_j$.
\end{proof}

We can rewrite the statement of Theorem~\ref{thm:asymptotics_Rtorsion} in terms of $\bar\rho$.
\begin{corollary}
  \label{cor:previous_result}
  If a Seifert manifold $\mfd$ has an $\SLC$-representation $\rho$ such that $\rho(h) = -\I$,
  then we have 
  \begin{equation*}
    \lim_{N \to \infty}
    \frac{\log|\Tor{\mfd}{\rho_{2N}}|
    }{2N}
    = 
    - \Big(2 -2g - \sum_{j=1}^{n} \frac{\lambda_j - 1}{\lambda_j}\Big) \log 2.
  \end{equation*}
  where $\lambda_j$ is the order of $\bar\rho(q_j)$ in $\PSLC$.
\end{corollary}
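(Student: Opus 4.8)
The plan is to obtain this as a direct consequence of Theorem~\ref{thm:asymptotics_Rtorsion} together with Proposition~\ref{prop:order_ell_j}, since essentially all of the analytic work has already been carried out and what remains is a translation of notation. First I would note that the hypothesis $\rho(h) = -\I$ on the $\SLC$-representation $\rho$ of $\pi_1(\mfd)$ is exactly the hypothesis under which Theorem~\ref{thm:asymptotics_Rtorsion} was stated. Hence that theorem applies without change and yields both the vanishing of the second-order term and the first-order limit
\[
\lim_{N \to \infty}\frac{\log|\Tor{\mfd}{\rho_{2N}}|}{2N} = -\Big(2 - 2g - \sum_{j=1}^{n}\frac{\lambda_j - 1}{\lambda_j}\Big)\log 2,
\]
where, in the formulation of Theorem~\ref{thm:asymptotics_Rtorsion}, the integer $\lambda_j$ is half the order of the $\SLC$-matrix $\rho(\ell_j)$ attached to the homotopy class $\ell_j$ of the $j$-th exceptional fiber.

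The second step is to rewrite the coefficients $\lambda_j$ in terms of the induced $\PSLC$-representation $\bar\rho$ of $\Gamma$. This is precisely the content of Proposition~\ref{prop:order_ell_j}: the integer $\lambda_j$ coincides with the order of $\bar\rho(q_j)$ in $\PSLC$. Substituting this identification into the displayed limit gives exactly the formula in the statement of the corollary, with $\lambda_j$ now read as the order of $\bar\rho(q_j)$. Conceptually, the role of this corollary is to repackage Theorem~\ref{thm:asymptotics_Rtorsion}, whose data live on the exceptional fibers of $\mfd$, into data living on the base Fuchsian group $\Gamma$; this is the form that will be convenient in the later sections, where one starts from a $\PSLR$-representation $\bar\rho$ of $\Gamma$ and a chosen euler class rather than from a representation of $\pi_1(\mfd)$ directly.

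The only point that requires a word of care — and it is not really an obstacle, since it has already been dealt with in the excerpt — is the compatibility between the symbol $\lambda_j$ as it occurs in Theorem~\ref{thm:asymptotics_Rtorsion} (half the order of $\rho(\ell_j)$) and as it occurs in Proposition~\ref{prop:order_ell_j} and in the present corollary (the order of $\bar\rho(q_j)$). The passage from $\rho(\ell_j)$ to $\bar\rho(q_j)$ uses that projecting to $\PSLC$ halves the order, and that the images of $\ell_j$ and $q_j$ in $\Gamma$ agree modulo the central class $h$, which $\bar\rho$ kills because $\rho(h) = -\I$; all of this is exactly what Proposition~\ref{prop:order_ell_j} establishes, so no new computation is needed and the corollary follows at once.
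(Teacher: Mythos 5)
Your proposal is correct and matches the paper's intent exactly: the corollary is stated there as an immediate restatement of Theorem~\ref{thm:asymptotics_Rtorsion} obtained by substituting Proposition~\ref{prop:order_ell_j}, which is precisely what you do. Your extra remark on the compatibility of $\lambda_j$ (half the order of $\rho(\ell_j)$ versus the order of $\bar\rho(q_j)$) is exactly the content already handled by that proposition, so no new argument is needed.
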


It is natural to try to start with a $\PSLC$-representation of $\Gamma$.
Here and subsequently, we focus our attention on $\PSLR$-representations of a Fuchsian group
and the induced $\SLR$-representation of $\pi_1(\mfd)$.

\medskip

Given a Fuchsian group $\Gamma = \Gamma(g; \alpha_1, \ldots, \alpha_n)$ and a $\PSLR$-representation $\bar\rho$,
we have a Seifert manifold $\mfd$ and a $\TPSLR$-representation $\widetilde{\rho}$ of $\pi_1(\mfd)$,
induced from the diagram:
\begin{equation}
\setlength{\arraycolsep}{2pt}
\begin{array}{ccccccccc}
  0 & \to & \Z=\langle h\rangle & \to &\pi_1(\mfd)& \to & \Gamma & \to & 1 \\
  &     & || &   & \quad \downarrow \raisebox{1pt}{$\scriptstyle \widetilde{\rho}$} & & \quad \downarrow \raisebox{2pt}{$\scriptstyle \bar\rho$} &  \\
  0 & \to & \Z & \to & \TPSLR & \to & \PSLR & \to & 1.
\end{array}
\end{equation}
The Seifert index $(g; (1, b), (\alpha_1, \beta_1), \ldots, (\alpha_n, \beta_n))$ of $M$ is given by 
the euler class $e(f)$.

We also have the $\SLR$-representation $\rho$ of $\pi_1(M)$ defined by the composition with the projection 
$\TPSLR \to \SLR$. 
Here we identify the universal cover $\TSLR$ with $\TPSLR$.
\begin{lemma}
  \label{lemma:h_to_shift_1}
  Suppose that 
  $(b; \beta_1, \ldots, \beta_n)$ satisfies the criteria of Theorem~\ref{thm:criterion_JN}.
  Then there exist $\TPSLR$-representations $\trho$ of $\pi_1(\Seifert)$
  such that $\trho(h) = \shift(1)$.
\end{lemma}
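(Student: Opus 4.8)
The plan is to produce the lift $\trho\colon\pi_1(\mfd)\to\TPSLR$ explicitly on generators, using the presentations of $\pi_1(\mfd)$ and of $\Gamma$ together with the central extension $0\to\Z\to\TPSLR\to\PSLR\to 1$. First I would pick, for each generator $g\in\{a_1,b_1,\ldots,a_g,b_g,q_1,\ldots,q_n\}$ of $\Gamma$, an arbitrary set-theoretic lift in $\TPSLR$ of $\bar\rho(g)$; call these $\tilde A_i,\tilde B_i,\tilde Q_j$. Since $h$ generates the central $\Z$, write $\shift(k)$ for the element of the center corresponding to $k\in\Z$ (the notation $\shift$ in the statement), so that a lift $\trho$ with $\trho(h)=\shift(1)$ is determined, once the images of the $\Gamma$-generators are fixed, by the requirement that the two relators of $\pi_1(\mfd)$ hold in $\TPSLR$. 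The only freedom is rescaling each $\tilde Q_j$ by a central element $\shift(m_j)$; the $\tilde A_i,\tilde B_i$ may also be rescaled but commutators kill that ambiguity, so it is harmless to leave them fixed.

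The two relations to verify are $q_j^{\alpha_j}=h^{\beta_j}$ and $q_1\cdots q_n\prod_i[a_i,b_i]=h^{-b}$. Because $\bar\rho(q_j)$ has order $\lambda_j$ dividing $\alpha_j$ (with $\beta_j$ chosen in $0\le\beta_j<\alpha_j$ so that, in $\univcover\Gamma$, $q_j^{\alpha_j}=h^{\beta_j}$), the element $\tilde Q_j^{\alpha_j}$ lands in the central $\Z$, say $\tilde Q_j^{\alpha_j}=\shift(c_j)$ for some integer $c_j$; replacing $\tilde Q_j$ by $\tilde Q_j\shift(m_j)$ changes $c_j$ to $c_j+\alpha_j m_j$. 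Similarly $\tilde Q_1\cdots\tilde Q_n\prod_i[\tilde A_i,\tilde B_i]=\shift(d)$ for some integer $d$. The point of invoking Theorem~\ref{thm:criterion_JN} is exactly that, under its numerical hypotheses, the euler class of $\bar\rho$ equals $bx_0+\beta_1x_1+\cdots+\beta_nx_n$ in $H^2(\Gamma;\Z)$; unwinding the definition of $e$ from the pull-back diagram, this says precisely that the pulled-back extension $\univcover\Gamma$ — which by construction has relators $q_j^{\alpha_j}=h^{\beta_j}$ and $q_1\cdots q_n\prod_i[a_i,b_i]=h^{-b}$ — is the one receiving $\tilde f$. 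Concretely, it forces the congruences $c_j\equiv\beta_j\pmod{\alpha_j}$ and $d\equiv -b-\sum_j(\text{correction from }c_j)\pmod{\text{everything}}$ to be solvable: choose $m_j$ with $c_j+\alpha_j m_j=\beta_j$, i.e. $m_j=(\beta_j-c_j)/\alpha_j$, which is an integer by the euler-class identity; then after replacing $\tilde Q_j$ by $\tilde Q_j\shift(m_j)$ the first relation holds on the nose, and the second relation's central discrepancy becomes $d+\sum_j m_j$, which the same identity pins to $-b$. Hence setting $\trho(q_j)=\tilde Q_j\shift(m_j)$, $\trho(a_i)=\tilde A_i$, $\trho(b_i)=\tilde B_i$, $\trho(h)=\shift(1)$ respects all relators, so $\trho$ is a well-defined homomorphism lifting $\bar\rho$ with $\trho(h)=\shift(1)$.

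The main obstacle — and the reason the hypothesis of Theorem~\ref{thm:criterion_JN} is needed rather than being automatic — is that a priori the integers $c_j$ and $d$ are only well-defined modulo the rescaling action, and there is no reason the resulting Seifert data $(c_j\bmod\alpha_j,\,d)$ should be realizable by \emph{any} admissible $(b;\beta_1,\ldots,\beta_n)$; what makes it work is that $\bar\rho$ genuinely comes from $\Hom(\Gamma,\PSLR)$, so its euler class lies in the image of $e$, and by Theorem~\ref{thm:criterion_JN} that image is governed by exactly the stated inequalities. So the argument is really: the numerical criteria $\Leftrightarrow$ the euler class of $\bar\rho$ is of the stated form $\Leftrightarrow$ the pulled-back central extension is isomorphic (rel $\Z$ and rel $\Gamma$) to the group with the prescribed relators $\Leftrightarrow$ a lift $\trho$ with $\trho(h)=\shift(1)$ exists. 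I would present this chain, doing the bookkeeping of the central corrections $m_j$ carefully, and remark that the non-uniqueness of $\trho$ (one may further twist by any homomorphism $\Gamma\to\Z$, of which there are $2g$ free parameters and none from the torsion $q_j$) does not affect the induced $\SLR$-representation's relevant properties, in particular $\rho(h)=-\I$, since $\shift(1)$ projects to $-\I$ under $\TPSLR=\TSLR\to\SLR$.
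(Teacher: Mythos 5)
Your proposal is correct and is essentially the paper's argument unwound: the paper's own proof is a one-line citation of Theorems~\ref{thm:criterion_JN} and~\ref{thm:components_fuchsian}, and your generator-level central-correction bookkeeping is exactly what one gets by unwinding the pull-back definition of the euler class $e$. The only point to phrase more carefully is the logical role of Theorem~\ref{thm:criterion_JN}: it is invoked in the existence direction --- the numerical criteria guarantee \emph{some} $\bar\rho$ with $e(\bar\rho) = b x_0 + \beta_1 x_1 + \cdots + \beta_n x_n$ --- and your computation of the corrections $m_j$ then produces the lift $\trho$ with $\trho(h)=\shift(1)$ for that $\bar\rho$ (and likewise for any $\bar\rho$ in the fiber $e^{-1}(b x_0 + \beta_1 x_1 + \cdots + \beta_n x_n)$).
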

\begin{remark}
  Here for any $\gamma \in \R$ we write $\shift(\gamma)$ for the shift by $\gamma$,
  that is the self--homeomorphism of $\R$, $r \mapsto r + \gamma$.
  We can consider $\TPSLR$ as a subgroup of the group of homeomorphisms $f: \R \to \R$
  which are lifts of homomeomorphisms of the circle.
  The shift $\shift(\gamma)$ is an element in $\TPSLR$,
  which projects to
  $
  \begin{pmatrix}
    \cos (2\pi \gamma) & -\sin (2\pi \gamma) \\
    \sin (2\pi \gamma) & \cos (2\pi \gamma)
  \end{pmatrix}
  $
  in $\PSLR$ by the projection.
  The center of $\TPSLR$ is $\{\shift(k) \,|\, k \in \Z\} \simeq \Z$.
\end{remark}
\begin{proof}
  This is a consequence of Theorems~\ref{thm:criterion_JN} and~\ref{thm:components_fuchsian}.
\end{proof}

\begin{theorem}
  \label{thm:main_I}
  Suppose that 
  $(b; \beta_1, \ldots, \beta_n)$ satisfies the criterion of Theorem~\ref{thm:criterion_JN}.
  If $\bar\rho$ is a $\PSLR$-representation of $\Gamma$ in
  $e^{-1}(bx_0 + \beta_1 x_1 + \cdots + \beta_n x_n)$
  and $\rho$ is the induced $\SLR$-representation of $\Seifert$ by $\bar\rho$, then
  $\lambda_j$ in the asymptotic behavior~\eqref{eqn:asymptotic_Rtorsion} is expressed as
  \[
  \lambda_j = \frac{\alpha_j}{(\alpha_j, \beta_j)}
  \]
  where $(\alpha_j, \beta_j)$ is the greatest common divisor of $\alpha_j$ and $\beta_j$.

  If all $(\alpha_j, \beta_j)$ are equal to $1$, then we have
    \begin{equation*}
    \lim_{N \to \infty}
    \frac{\log|\Tor{\Seifert}{\rho_{2N}}|
    }{2N}
    = 
    - \Big(2 -2g - \sum_{j=1}^{n} \frac{\alpha_j - 1}{\alpha_j}\Big) \log 2
    =
    - \chi \log 2
    \end{equation*}
    where $\chi$ is the Euler characteristic of the base orbifold.
\end{theorem}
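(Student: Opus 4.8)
The plan is to reduce Theorem~\ref{thm:main_I} to Corollary~\ref{cor:previous_result} by computing the order of $\bar\rho(q_j)$ in $\PSLR$ for a representation $\bar\rho$ lying in the component $e^{-1}(bx_0 + \beta_1 x_1 + \cdots + \beta_n x_n)$. First I would recall, from the construction of the euler class in Section~\ref{sec:rep_Fuchsian}, that the central extension $\widetilde{\Gamma}$ associated to $\bar\rho$ has the relations $q_j^{\alpha_j} = h^{\beta_j}$ (with $0 \le \beta_j < \alpha_j$) and $q_1\cdots q_n \prod [a_i,b_i] = h^{-b}$, and that by Lemma~\ref{lemma:h_to_shift_1} we may choose the lift $\widetilde\rho$ so that $\widetilde\rho(h) = \shift(1)$, the generator of the center of $\TPSLR$. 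Composing with the projection $\TPSLR \to \PSLR$ kills $h$, so the induced representation $\bar\rho$ of $\Gamma$ genuinely factors through $\Gamma$, and we have the compatibility $\widetilde\rho(q_j)^{\alpha_j} = \shift(\beta_j)$ in $\TPSLR$.

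The key step is then a purely group-theoretic computation of the order of $\bar\rho(q_j)$, i.e.\ of the image of $q_j$ in $\PSLR$. Write $d_j = (\alpha_j,\beta_j)$, $\alpha_j = d_j \alpha_j'$, $\beta_j = d_j \beta_j'$ with $(\alpha_j',\beta_j') = 1$. The order of $\bar\rho(q_j)$ is the least positive integer $m$ with $\bar\rho(q_j)^m = I$ in $\PSLR$, equivalently the least $m$ such that $\widetilde\rho(q_j)^m$ lies in the center $\{\shift(k) : k \in \Z\}$ of $\TPSLR$. Now $\widetilde\rho(q_j)^{\alpha_j} = \shift(\beta_j) = \shift(d_j\beta_j')$ is already central; more precisely, raising the relation to the power $\alpha_j'$ shows $\widetilde\rho(q_j)^{\alpha_j'\alpha_j}$ would overshoot, so the right bookkeeping is: $\widetilde\rho(q_j)$ has a well-defined rotation number, and the relation $\widetilde\rho(q_j)^{\alpha_j} = \shift(\beta_j)$ forces this rotation number to be $\beta_j/\alpha_j$ modulo $1$ in the relevant sense, so the smallest power landing in the center is $m = \alpha_j / d_j = \alpha_j/(\alpha_j,\beta_j)$. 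Here I would use that $q_j$ maps to an \emph{elliptic} (finite-order) element of $\PSLR$ — this is exactly where the conjugation by $\bar\rho$ being an honest representation of $\Gamma$, in which $q_j^{\alpha_j} = 1$, enters: $\bar\rho(q_j)^{\alpha_j} = I$, so the order divides $\alpha_j$, and the lift data pins it to exactly $\alpha_j/(\alpha_j,\beta_j)$. By Corollary~\ref{cor:previous_result}, $\lambda_j$ equals this order, giving the first claim $\lambda_j = \alpha_j/(\alpha_j,\beta_j)$.

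For the final displayed formula, I would simply substitute $(\alpha_j,\beta_j) = 1$, so $\lambda_j = \alpha_j$, into the asymptotic expression~\eqref{eqn:asymptotic_Rtorsion} of Corollary~\ref{cor:previous_result}, obtaining
\[
\lim_{N \to \infty}\frac{\log|\Tor{\Seifert}{\rho_{2N}}|}{2N}
= -\Big(2 - 2g - \sum_{j=1}^n \frac{\alpha_j - 1}{\alpha_j}\Big)\log 2,
\]
and then recognize $2 - 2g - \sum_{j=1}^n (1 - 1/\alpha_j)$ as the orbifold Euler characteristic $\chi$ of $\upperH^2/\Gamma$, by the standard formula $\chi = 2 - 2g - \sum (1 - 1/\alpha_j)$ for a genus-$g$ surface with $n$ cone points of orders $\alpha_1,\dots,\alpha_n$. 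The main obstacle I anticipate is the order computation in the middle paragraph: one must argue carefully that the rotation-number/lift bookkeeping in $\TPSLR$ really yields $\alpha_j/(\alpha_j,\beta_j)$ rather than $\alpha_j$ outright, and in particular handle the case $\beta_j = 0$ (where $q_j$ could in principle map to the identity, making $\lambda_j = 1$, consistent with $\alpha_j/(\alpha_j,0) = \alpha_j/\alpha_j = 1$ under the convention $(\alpha_j,0)=\alpha_j$) — so the statement should be read with that gcd convention, and the elliptic-conjugacy-class classification in $\PSLR$ is what makes the argument rigorous.
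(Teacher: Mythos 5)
Your proposal is correct and follows essentially the same route as the paper: reduce to Corollary~\ref{cor:previous_result} (equivalently Proposition~\ref{prop:order_ell_j}) and identify $\trho(q_j)$ as conjugate to $\shift(\beta_j/\alpha_j)$ in $\TPSLR$, so that $\bar\rho(q_j)$ is a rotation of order $\alpha_j/(\alpha_j,\beta_j)$, after which the $(\alpha_j,\beta_j)=1$ substitution and the orbifold Euler characteristic formula finish the argument. The only difference is that the paper cites this conjugacy statement as Lemma~\ref{lemma:q_j_to_b_j_a_j} (taken from the proof of Theorem~1 of Jankins--Neumann), whereas you sketch its proof via the rotation-number/lift bookkeeping (your ``overshoot'' sentence is muddled, but the ellipticity-plus-translation-number argument you give is exactly the content of that lemma).
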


\begin{remark}
  If $M$ is the unit tangent bundle over a two--orbifold $\upperH^2 / \Gamma$
  whose fundamental group is embedded as a Fuchsian group $\Gamma$ in $\PSLR$,
  \ie $M = \mathrm{T}^{1} \mathbb{H}^2 / \Gamma (= \PSLR / \Gamma)$,
  then $M$ is the Seifert manifold with the index of
  $(g; (2g-2), (\alpha_1, \alpha_1-1), \ldots, (\alpha_n, \alpha_n-1))$ and
  there exists a lift $\trho$ of an embedding $\bar\rho$ of $\Gamma$ into $\PSLR$ as follows:
  \[
  \setlength{\arraycolsep}{2pt}
  \begin{array}{ccccccccc}
    0 & \to & \Z & \to &\pi_1(\mfd)& \to & \Gamma & \to & 1 \\
    &     & || &   & \quad \downarrow \raisebox{1pt}{$\scriptstyle \widetilde{\rho}$} & & \quad \downarrow \raisebox{2pt}{$\scriptstyle \bar\rho$} &  \\
    0 & \to & \Z & \to & \TPSLR & \to & \PSLR & \to & 1.
  \end{array}
  \]
  We can regard this $\trho$ as an embedding of $\pi_1(\mfd)$ into $\Isom^+(\TPSLR)$
  since $\TPSLR$ is a subgroup of $\Isom^+(\TPSLR)$.
  For more details, we refer to~\cite{scott83}.
\end{remark}

\begin{corollary}
  \label{cor:unit_tangent_bundle}
  Suppose that $M$ is the quotient of $\PSLR$ by a Fuchsian group $\Gamma$ and
  $\trho$ is an embedding of $\pi_1(\mfd)$ into $\TPSLR ( \subset \Isom^+(\TPSLR) )$.
  For the $\SLR$-representation $\rho$ induced by $\trho$, the asymptotic behavior of
  the Reidemeister torsion is expressed as
  \begin{equation*}
    \lim_{N \to \infty}
    \frac{\log|\Tor{\Seifert}{\rho_{2N}}|
    }{2N}
    = 
    - \chi \log 2
  \end{equation*}
\end{corollary}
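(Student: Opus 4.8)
The plan is to derive Corollary~\ref{cor:unit_tangent_bundle} as the special case of Theorem~\ref{thm:main_I} in which the Fuchsian group $\Gamma$ is realized as the deck group of the universal cover $\upperH^2 \to \upperH^2/\Gamma$ and $\bar\rho$ is the tautological embedding $\Gamma \hookrightarrow \PSLR$. First I would recall, as stated in the remark preceding the corollary, that the unit tangent bundle $T^1(\upperH^2/\Gamma) = \PSLR/\Gamma$ is the Seifert manifold with index $(g; (1, 2g-2), (\alpha_1, \alpha_1-1), \ldots, (\alpha_n, \alpha_n-1))$; so in the notation of Theorem~\ref{thm:main_I} we have $b = 2g-2$ and $\beta_j = \alpha_j - 1$ for each $j$. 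Since $g \ge 0$ and $n \ge 0$ with the base orbifold hyperbolic, one checks that $(b;\beta_1,\ldots,\beta_n) = (2g-2; \alpha_1-1, \ldots, \alpha_n-1)$ satisfies the criteria of Theorem~\ref{thm:criterion_JN}: when $g>0$ we need $2-2g-n \le 2g-2 \le 2g-2$, which holds; when $g=0$ we are in case (iii), $b = 1-n$ with $\sum_j (\alpha_j-1)/\alpha_j = n - \sum_j 1/\alpha_j \ge n-1$ exactly because $\sum_j 1/\alpha_j \le 1$ (negative Euler characteristic). Equivalently, and more cleanly, I would simply invoke the existence of the embedding $\trho$ asserted in the remark (via~\cite{scott83} and Lemma~\ref{lemma:h_to_shift_1}), which already guarantees that $\bar\rho$ lies in the component $e^{-1}(bx_0 + \beta_1 x_1 + \cdots + \beta_n x_n)$ with these values of $b, \beta_j$.

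Next I would apply Theorem~\ref{thm:main_I} with these parameters. The key arithmetic point is that $\gcd(\alpha_j, \beta_j) = \gcd(\alpha_j, \alpha_j - 1) = 1$ for every $j$, so the hypothesis "all $(\alpha_j,\beta_j)$ are equal to $1$" in the second half of Theorem~\ref{thm:main_I} is automatically met. Hence $\lambda_j = \alpha_j/(\alpha_j,\beta_j) = \alpha_j$, and the theorem gives directly
\[
\lim_{N \to \infty} \frac{\log|\Tor{\Seifert}{\rho_{2N}}|}{2N} = -\Big(2 - 2g - \sum_{j=1}^n \frac{\alpha_j - 1}{\alpha_j}\Big)\log 2.
\]
Finally I would identify the bracketed quantity with $\chi$, the Euler characteristic of the orbifold $\upperH^2/\Gamma$: by the orbifold Gauss--Bonnet / Riemann--Hurwitz formula, $\chi(\upperH^2/\Gamma) = 2 - 2g - \sum_{j=1}^n (1 - 1/\alpha_j) = 2 - 2g - \sum_{j=1}^n \frac{\alpha_j-1}{\alpha_j}$. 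Substituting yields the claimed value $-\chi\log 2$, completing the proof.

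There is essentially no obstacle here once Theorem~\ref{thm:main_I} is in hand; the corollary is a direct specialization. The only point requiring a word of care is the consistency check that the embedding $\Gamma \hookrightarrow \PSLR$ really does have euler class given by the Seifert data $(2g-2; \alpha_1-1,\ldots,\alpha_n-1)$ of $\PSLR/\Gamma$ — i.e. that the "geometric" lift of the tangent-bundle structure agrees with the lift produced by the Jankins--Neumann machinery. I would handle this by citing Scott's description~\cite{scott83} of $\PSLR/\Gamma$ together with the identification of its fundamental group's presentation with $\univcover{\Gamma}$ for the embedding, so that $e(\bar\rho) = (2g-2)x_0 + \sum_j (\alpha_j-1)x_j$ by the very definition of $e$ given in Section~\ref{sec:rep_Fuchsian}; then Lemma~\ref{lemma:h_to_shift_1} provides the required $\trho$ with $\trho(h) = \shift(1)$, which projects to the $\SLR$-representation $\rho$ with $\rho(h) = -\I$, placing us squarely in the setting of Theorem~\ref{thm:main_I} and Theorem~\ref{thm:asymptotics_Rtorsion}.
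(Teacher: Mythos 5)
Your proposal is correct and takes essentially the same route as the paper: the paper's proof simply notes that $\beta_j=\alpha_j-1$ gives $(\alpha_j,\alpha_j-1)=1$ by the Euclidean algorithm and then applies Theorem~\ref{thm:main_I}, identifying $2-2g-\sum_j(\alpha_j-1)/\alpha_j$ with the orbifold Euler characteristic exactly as you do. Your additional verification of the criteria of Theorem~\ref{thm:criterion_JN} is not needed (the corollary already hypothesizes the embedding $\trho$, as you yourself observe), and in that optional check note that for $g=0$ with $n>3$ one has $b=-2\neq 1-n$ and falls into case (ii)(a) rather than (ii)(c); this minor slip does not affect your main argument.
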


\begin{proof}
  It follows from Euclidean algorithm that $(\alpha_i, \alpha_i-1) = 1$.
  Together with Theorem~\ref{thm:main_I}, we obtain the limit in our claim. 
\end{proof}

\begin{remark}
  The Seifert manifold $\PSLR / \Gamma$ is also regarded as
  $\TPSLR / p^{-1}(\Gamma)$ where $p$ is the projection from $\TPSLR$ onto $\PSLR$.
\end{remark}

The next lemma was shown in the proof of~\cite[Theorem~$1$]{JankinsNeumannHomomorphisms}.
\begin{lemma}
  \label{lemma:q_j_to_b_j_a_j}
  Let $\bar\rho$ be a $\PSLR$-representation of $\Gamma$ in
  $e^{-1}(bx_0 + \beta_1 x_1 + \cdots + \beta_n x_n)$.
  The induced $\TPSLR$-representation $\trho$ of $\Seifert$
  satisfies that
  every $\trho(q_j)$ is conjugate to $\shift(\beta_j / \alpha_j)$
  in $\TPSLR$ for $j=1, \ldots, n$.
\end{lemma}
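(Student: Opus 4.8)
The plan is to unwind the construction of the lift $\trho$ and compare the action of $\trho(q_j)$ on $\R$ with that of $\shift(\beta_j/\alpha_j)$, exploiting that the translation number is a conjugacy invariant on $\TPSLR$ that is faithful on the relevant conjugacy classes. First I would recall that $\bar\rho(q_j)$ has finite order $\lambda_j = \alpha_j/(\alpha_j,\beta_j)$ in $\PSLR$, so it is an elliptic element; hence $\bar\rho(q_j)$ is conjugate in $\PSLR$ to a rotation $\begin{pmatrix}\cos(2\pi\theta_j) & -\sin(2\pi\theta_j)\\ \sin(2\pi\theta_j) & \cos(2\pi\theta_j)\end{pmatrix}$ with $\theta_j = k_j/\lambda_j$ for some integer $k_j$ coprime to $\lambda_j$. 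Lifting this to $\TPSLR$, any lift of $\bar\rho(q_j)$ is conjugate to $\shift(\theta_j + m)$ for some $m \in \Z$ (the integer ambiguity being exactly the kernel $\Z = \langle \shift(1)\rangle$ of $p: \TPSLR \to \PSLR$). So the content of the lemma is the identification of which lift is selected by $\trho$, namely that the translation number of $\trho(q_j)$ equals $\beta_j/\alpha_j$ exactly, not merely modulo $1$.

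The key step is the normalization coming from the presentation of $\univcover{\Gamma} = \pi_1(\Seifert)$ used to define the euler class. By construction, $\univcover{\Gamma}$ is presented with generators $a_i, b_i, q_j, h$ subject to $q_j^{\alpha_j} = h^{\beta_j}$ with $0 \le \beta_j < \alpha_j$, and $\trho$ sends $h$ to $\shift(1)$ by Lemma~\ref{lemma:h_to_shift_1}. Applying $\trho$ to the relation $q_j^{\alpha_j} = h^{\beta_j}$ gives $\trho(q_j)^{\alpha_j} = \shift(\beta_j)$. Now I would use that the translation number $\mathrm{rot}: \TPSLR \to \R$ is a homogeneous quasimorphism which is an actual homomorphism on any abelian (in particular cyclic) subgroup, so $\alpha_j \cdot \mathrm{rot}(\trho(q_j)) = \mathrm{rot}(\shift(\beta_j)) = \beta_j$, forcing $\mathrm{rot}(\trho(q_j)) = \beta_j/\alpha_j$. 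Since $\bar\rho(q_j)$ is elliptic and nontrivial (its order $\lambda_j > 1$ whenever $\beta_j \ne 0$; the case $\beta_j = 0$ gives $\trho(q_j)$ of finite order with zero translation number, hence $\trho(q_j)$ is conjugate to the identity $= \shift(0)$, consistent with the claim), the element $\trho(q_j)$ is a lift of an elliptic element with prescribed translation number $\beta_j/\alpha_j \in (0,1)$, and such an element is conjugate in $\TPSLR$ to $\shift(\beta_j/\alpha_j)$: indeed two elliptic-type elements of $\TPSLR$ with the same translation number in $(0,1)$ are conjugate, because their projections are conjugate rotations in $\PSLR$ and the conjugating element lifts, preserving translation numbers.

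The main obstacle I anticipate is the bookkeeping around the integer ambiguity in the lift: one must be careful that $\trho$ is genuinely the representation defined through the pullback diagram with the specific presentation of $\univcover{\Gamma}$ in which the exponents satisfy $0 \le \beta_j < \alpha_j$, rather than some other lift differing by a central element; it is precisely this normalization (together with $\trho(h) = \shift(1)$) that pins down the translation number on the nose. A secondary point requiring care is justifying that translation number is a genuine homomorphism on the cyclic group generated by $\trho(q_j)$ and that it detects the conjugacy class among lifts of a fixed elliptic element of $\PSLR$ — this is standard for $\TPSLR$ acting on $\R$, and I would cite Jankins--Neumann~\cite{JankinsNeumannHomomorphisms} (where this computation appears in the proof of their Theorem~1) together with a reference such as~\cite{scott83} for the structure of $\TPSLR$ as a group of homeomorphisms of $\R$. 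With these ingredients the proof is short, essentially: apply $\trho$ to the relation $q_j^{\alpha_j} = h^{\beta_j}$, read off the translation number, and invoke conjugacy of elliptic lifts with equal translation number.
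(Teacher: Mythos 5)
Your argument is correct: the normalization $0\le\beta_j<\alpha_j$ in the presentation of the pullback extension, together with $\trho(h)=\shift(1)$, turns the relation $q_j^{\alpha_j}=h^{\beta_j}$ into $\trho(q_j)^{\alpha_j}=\shift(\beta_j)$, and then homogeneity of the translation number (or, even more simply, conjugating $\trho(q_j)$ to a shift $\shift(\theta)$ and using that its $\alpha_j$-th power is the central element $\shift(\beta_j)$) plus the fact that lifts of elliptic elements are classified up to conjugacy in $\TPSLR$ by their translation number yields the conjugacy to $\shift(\beta_j/\alpha_j)$. The paper gives no proof of this lemma beyond citing the proof of Theorem~1 of Jankins--Neumann, and your translation-number computation is precisely the argument contained there, so this is essentially the same approach.
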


We enclose this section with the proof of Theorem~\ref{thm:main_I}.
\begin{proof}[Proof of Theorem~\ref{thm:main_I}]
  Our $\SLR$-representation $\rho$ of $\pi_1(\Seifert)$ is given by
  the $\TPSLR$-representation $\trho$ as follows:
  \[
  \rho : \pi_1(\Seifert) \xrightarrow{\trho} \TPSLR \to \SLR.
  \]
  By Lemmas~\ref{lemma:h_to_shift_1} and~\ref{lemma:q_j_to_b_j_a_j},
  we can see that 
  \[
  \rho(q_j) \stackrel{\scriptstyle conj.}{\sim}
  \begin{pmatrix}
    \cos \pi \beta_j / \alpha_j & - \sin \pi \beta_j / \alpha_j \\
    \sin \pi \beta_j / \alpha_j & \cos \pi \beta_j / \alpha_j
  \end{pmatrix}.
  \]
  Hence the order of $\bar\rho(g_j)$ in $\PSLR$ is equal to $\alpha_j / (\alpha_j, \beta_j)$.
  Together with Proposition~\ref{prop:order_ell_j},
  we can obtain that $\lambda_j = \alpha_j / (\alpha_j, \beta_j)$.
\end{proof}

\section{$\Z_2$-extensions of a Fuchsian group and $\SLR$-representations}
\label{sec:Z_2_extensions}
Let $\Gamma$ be a Fuchsian group of genus $g$ with
branch indices $\alpha_1, \ldots, \alpha_n$ and
$\mfd$ be a Seifert manifold with the index
$(g; (1, b), (\alpha_1, \beta_1), \ldots, (\alpha_n, \beta_n))$.
Theorem~\ref{thm:criterion_JN} shows that 
when the integers $(b, \beta_1, \ldots, \beta_n)$ corresponds to  
some euler class $e(\bar\rho)$ in $H^2(\Gamma;\Z)$, 
we have a $\TPSLR$-representation $\trho$ of $\pi_1(\mfd)$ induced by $\bar\rho$.
Then $\trho$ also induces an $\SLR$-representation $\rho$ such that $\rho(h)=-\I$.

We can also find $\SLR$-representations of $\pi_1(\mfd)$ induced by
other $(b', \beta'_1, \ldots, \beta'_n)$ in $H^2(\Gamma;\Z)$.
We classify $(b', \beta'_1, \ldots, \beta'_n)$ which gives 
an $\SLR$-representation of $\pi_1(\mfd)$.

\subsection{$\Z_2$-extension and $\SLR$-representation}
Every irreducible $\SLR$-representation $\rho$ of $\pi_1(\mfd)$ factors through
$\pi_1(\mfd) / \langle h^2 \rangle$
since $\rho$ sends the central element $h$ into the center of $\SLR$.
When we regard $\Gamma$ as $\pi_1(\mfd) / \langle h \rangle$,
we have the following central extension of $\Gamma$:
\[
0 \to \Z / 2\Z \to \pi_1(\mfd) / \langle h^2 \rangle \to \Gamma \to 1.
\]
Taking the pull--back central extension from 
\[
\setlength{\arraycolsep}{2pt}
\begin{array}{ccccccccc}
  & & & & & & \Gamma & &  \\
  & & & & & & \; \downarrow \raisebox{2pt}{$\scriptstyle \bar\rho$} &  \\
  0 & \to & \Z / 2\Z & \to & \SLR & \to & \PSLR & \to & 1,
\end{array}
\]
we have the following diagram:
\[
\setlength{\arraycolsep}{2pt}
\begin{array}{ccccccccc}
  0 & \to & \Z / 2\Z & \to & \hat\Gamma & \to & \Gamma & \to & 1 \\
  &     & || &   & \; \downarrow \raisebox{1pt}{${\scriptstyle \rho}$} & & \; \downarrow \raisebox{2pt}{$\scriptstyle \bar\rho$} &  \\
  0 & \to & \Z / 2\Z& \to & \SLR & \to & \PSLR & \to & 1.
\end{array}
\]
Here $\hat \Gamma$ is isomorphic to $\pi_1(\mfd) / \langle h^2 \rangle$ for some $\mfd$.
\begin{lemma}
  \label{lemma:ExtZ2}
  The group of central extensions of $\Gamma$ by $\Z / 2\Z$
  is expressed as 
  \[
  \Ext(\Gamma;\Z/2\Z)
  \simeq
  \Ext(\Gamma;\Z) \,/ \,2 \Ext(\Gamma;\Z).
  \]
\end{lemma}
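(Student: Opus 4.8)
The plan is to identify the group $\Ext(\Gamma;A)$ of equivalence classes of central extensions $0\to A\to E\to\Gamma\to 1$ (with trivial $\Gamma$-action on the abelian kernel $A$) with the second group cohomology $H^2(\Gamma;A)$, and then to apply the Bockstein long exact sequence attached to the coefficient sequence $0\to\Z\xrightarrow{\times 2}\Z\to\Z/2\Z\to 0$. Under the identification $\Ext(\Gamma;A)\cong H^2(\Gamma;A)$ the homomorphism $\Ext(\Gamma;\Z)\to\Ext(\Gamma;\Z/2\Z)$ obtained by pushing out a central $\Z$-extension along the reduction $\Z\to\Z/2\Z$ is exactly the change-of-coefficients map $r\colon H^2(\Gamma;\Z)\to H^2(\Gamma;\Z/2\Z)$, so it suffices to show that $r$ is surjective with kernel $2H^2(\Gamma;\Z)$; this realizes the claimed isomorphism as the natural one.

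The Bockstein sequence reads
\[
H^2(\Gamma;\Z)\xrightarrow{\times 2}H^2(\Gamma;\Z)\xrightarrow{\ r\ }H^2(\Gamma;\Z/2\Z)\xrightarrow{\ \delta\ }H^3(\Gamma;\Z)\xrightarrow{\times 2}H^3(\Gamma;\Z).
\]
Exactness immediately gives $\ker r=2H^2(\Gamma;\Z)$ and $\operatorname{im}r=\ker\delta$, while $\operatorname{im}\delta=\ker\bigl(H^3(\Gamma;\Z)\xrightarrow{\times 2}H^3(\Gamma;\Z)\bigr)=H^3(\Gamma;\Z)[2]$. Hence there is a short exact sequence
\[
0\to H^2(\Gamma;\Z)\big/2H^2(\Gamma;\Z)\xrightarrow{\ \bar r\ }H^2(\Gamma;\Z/2\Z)\to H^3(\Gamma;\Z)[2]\to 0,
\]
and the lemma is equivalent to the statement that $H^3(\Gamma;\Z)$ has no $2$-torsion.

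For this last point I would use the structure of the integral cohomology of a cocompact Fuchsian group $\Gamma=\Gamma(g;\alpha_1,\dots,\alpha_n)$. Viewing $\Gamma$ as the fundamental group of a closed orientable $2$-orbifold, one can build a $K(\Gamma,1)$ (when $n\geq 1$; the case $n=0$ is a closed surface group and is classical) from a wedge of circles — realizing $\pi_1$ of the underlying surface with the cone disks removed — by attaching, along each boundary curve $c_j$, a copy of $B(\Z/\alpha_j\Z)$. A Mayer--Vietoris computation then gives $H_2(\Gamma;\Z)\cong\Z$ (the orbifold fundamental class) and $H_3(\Gamma;\Z)\cong\bigoplus_j\Z/\alpha_j\Z$, so by the universal coefficient theorem $H^3(\Gamma;\Z)\cong\Ext^1_{\Z}\!\bigl(H_2(\Gamma;\Z),\Z\bigr)\oplus\Hom\!\bigl(H_3(\Gamma;\Z),\Z\bigr)=\Ext^1_{\Z}(\Z,\Z)\oplus 0=0$; in particular $H^3(\Gamma;\Z)[2]=0$ and $\bar r$ is an isomorphism. (Equivalently, one may cite the classical computation of the homology of Fuchsian groups, or note that in degrees $\geq 3$ the cohomology of $\Gamma$ is concentrated in the cone-point contributions, copies of $\Z/\alpha_j\Z$ with $H^{\mathrm{odd}}(\Z/\alpha_j\Z;\Z)=0$.) The identification of central extensions with $H^2$ and the bookkeeping in the Bockstein sequence are routine; the one step requiring genuine input — and thus the main obstacle — is establishing this fact about $\Gamma$ in degree $3$, equivalently that $H_2(\Gamma;\Z)$ is torsion-free, which I would settle by the Mayer--Vietoris argument above or by reference to the known (co)homology of cocompact Fuchsian groups, recording $H_2(\Gamma;\Z)\cong\Z$ and $H^3(\Gamma;\Z)=0$ as the ingredients actually used.
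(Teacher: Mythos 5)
Your argument is correct, but it proves the lemma by a genuinely different route than the paper. You identify the group of central extensions with group cohomology, $\Ext(\Gamma;A)\cong H^2(\Gamma;A)$, and run the Bockstein sequence for $0\to\Z\xrightarrow{\times2}\Z\to\Z/2\Z\to0$, reducing everything to the vanishing of the $2$-torsion of $H^3(\Gamma;\Z)$, which you get from the classical computation $H_2(\Gamma;\Z)\cong\Z$, $H_3(\Gamma;\Z)\cong\bigoplus_j\Z/\alpha_j\Z$ (orbifold model of $K(\Gamma,1)$ plus Mayer--Vietoris and universal coefficients); your bookkeeping in the exact sequence is right, and the Mayer--Vietoris computation checks out. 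The paper instead stays entirely inside the Seifert-fibered framework of Neumann--Jankins: it uses the explicit description of $\Ext(\Gamma;\Z)$ as $ab\langle x_0,\ldots,x_n\mid\alpha_ix_i=x_0\rangle$ via the central extensions $\pi(s_0,\ldots,s_n)=\pi_1(M(g;(1,s_0),(\alpha_1,s_1),\ldots,(\alpha_n,s_n)))$ and the kernel functions $\nu,\nu'$, quoting the proof of Theorem~10.4 of Neumann--Jankins, so the identification of classes with tuples $(b,\beta_1,\ldots,\beta_n)$ is built in from the start. What your route buys is naturality: your isomorphism is induced by reduction mod $2$ of the extension, i.e.\ by $\pi_1(M)\mapsto\pi_1(M)/\langle h^2\rangle$, which is exactly the form in which the lemma is applied later, whereas the paper's route avoids any appeal to Bockstein or to $H^3$ and only needs the combinatorics of Seifert invariants. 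The one place where you lean on outside input is the asphericity of the glued model (equivalently the known homology of cocompact Fuchsian groups, e.g.\ via the Borel construction over $\upperH^2$), which you assert rather than prove; that fact is classical, so this is a citation issue rather than a gap.
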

\begin{remark}
Lemma~\ref{lemma:ExtZ2} is a consequence of
the remark following the proof of~\cite[Theorem~$10.4$ ]{NeumannJankins:Seifert}. 
\end{remark}
\begin{proof}
  In proving the surjectivity of the following homomorphism:
  \[
  H^2(\Gamma;\Z) = \Ext(\Gamma;\Z/2\Z) 
  \to
  ab\langle x_0, \ldots, x_n \,|\, \alpha_i x_i = x_0, i=1, \ldots, n \rangle,
  \]
  we define a function
  \[
  \nu : ab\langle x_0, \ldots, x_n \,|\, \alpha_i x_i = x_0, i=1, \ldots, n \rangle
  \to S(\Z) = \{\hbox{subgroups of $\Z$}\}
  \]
  by $[s_0, \ldots, s_n] \mapsto \ker(\Z \to \pi (s_0, \ldots, s_n))$.
  Here $\pi(s_0, \ldots, s_n)$ is a central extension of $\Gamma$ given by
  $\pi_1(\mfd(g;(1,s_0), (\alpha_1, s_1), \ldots, (\alpha_n, s_n)))$.
  The function $\nu$ gives a central extension of $\Gamma$ by
  \[
  1 \to \Z / \nu([s_0, \ldots, s_n]) \to \pi(s_0, \ldots, s_n) \to \Gamma \to 1
  \]
  for each $[s_0, \ldots, s_n]$.
  It is shown in~\cite[the proof of Theorem~$10.4$ ]{NeumannJankins:Seifert} that
  \begin{equation}
    \label{eqn:proof_ext_Z}
    \Ext(\Gamma; \Z) \simeq \nu^{-1}(\{0\}), \quad 
    \nu^{-1}(\{0\}) = ab\langle x_0, \ldots, x_n \,|\, \alpha_i x_i = x_0, i=1, \ldots, n \rangle.
  \end{equation}

  In the case of $\Ext(\Gamma;\Z / 2\Z)$,
  we also define a function
  \[
  \nu' :
  ab\langle x_0, \ldots, x_n \,|\, \alpha_i x_i = x_0, i=1, \ldots, n \rangle
  \to S(\Z) = \{\hbox{subgroups of $\Z$}\}
  \]
  by $[s_0, \ldots, s_n] \mapsto \ker(\Z \to \pi (s_0, \ldots, s_n) / \langle h^2 \rangle)$.
  One can see that 
  \[
  \Ext(\Gamma; \Z / 2\Z)
  \simeq \nu'^{-1}(2\Z) \,/\, 2 \, ab\langle x_0, \ldots, x_n \,|\, \alpha_i x_i = x_0, i=1, \ldots, n \rangle
  \]
  since the submodule $2 \, ab\langle x_0, \ldots, x_n \,|\, \alpha_i x_i = x_0, i=1, \ldots, n \rangle$
  corresponds to the trivial extension.
  We can see that each element of $\nu^{-1}(\{0\})$ is contained in $\nu'^{-1}(2\Z)$
  by the following diagram:
  \[
  \setlength{\arraycolsep}{2pt}
  \begin{array}{ccccccccc}
    1 & \to & \Z & \to & \pi(s_0, \ldots, s_n) & \to & \Gamma & \to & 1 \\
    &     &\downarrow &     & \downarrow & \nearrow &  &  \\
    & & \Z/2\Z & \to & \pi(s_0, \ldots, s_n)/\langle h^2 \rangle. &  &  & & 
  \end{array}
  \]  
  Hence it follows from
  \[
  \nu^{-1}(\{0\}) \subset
  \nu'^{-1}(2\Z) \subset
  ab\langle x_0, \ldots, x_n \,|\, \alpha_i x_i = x_0, i=1, \ldots, n \rangle \\
  \]
  and \eqref{eqn:proof_ext_Z}
  that
  $
  \nu^{-1}(2\Z) =
  ab\langle x_0, \ldots, x_n \,|\, \alpha_i x_i = x_0, i=1, \ldots, n \rangle
  \simeq \Ext(\Gamma;\Z).
  $
  Therefore we have the isomorphism between $\Ext(\Gamma;\Z/2\Z)$ and $\Ext(\Gamma;\Z) / 2 \Ext(\Gamma;\Z)$.
\end{proof}

\begin{theorem}
  \label{prop:Z2extension_representation}
  Suppose that 
  both of $(b, \beta_1, \ldots, \beta_n)$ and $(b', \beta'_1, \ldots, \beta'_n)$
  give the same class in $\Ext(\Gamma;\Z/2\Z)$.
  If $(b', \beta'_1, \ldots, \beta'_n)$ gives an euler class as in Theorem~\ref{thm:criterion_JN}, 
  then 
  the central extension $[(b', \beta'_1, \ldots, \beta'_n)]$ in $\Ext(\Gamma;\Z)$
  induces an $\SLR$-representation $\rho'$ of $\pi_1(\mfd)$ such that $\rho'(h) = -\I$.
\end{theorem}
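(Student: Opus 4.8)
The plan is to transport to $\pi_1(\mfd)/\langle h^2\rangle$ the $\SLR$-representation that a $\PSLR$-representation of $\Gamma$ with euler class $b'x_0+\beta'_1x_1+\cdots+\beta'_nx_n$ produces by pull-back, exploiting that this tuple and $(b,\beta_1,\ldots,\beta_n)$ determine isomorphic central $\Z/2\Z$-extensions of $\Gamma$. First I would use the hypothesis on the primed tuple: since $(b',\beta'_1,\ldots,\beta'_n)$ gives an euler class as in Theorem~\ref{thm:criterion_JN}, there is a $\PSLR$-representation $\bar\rho'$ of $\Gamma$ with $e(\bar\rho')=b'x_0+\beta'_1x_1+\cdots+\beta'_nx_n$. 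Pulling back the central extension $0\to\Z/2\Z\to\SLR\to\PSLR\to1$ along $\bar\rho'$, exactly as in the discussion opening Section~\ref{sec:Z_2_extensions}, yields a central extension $0\to\Z/2\Z\to\widehat{\Gamma}'\to\Gamma\to1$ together with a homomorphism $\rho'_0\colon\widehat{\Gamma}'\to\SLR$ that restricts to the canonical isomorphism $\Z/2\Z\xrightarrow{\sim}\{\pm\I\}$ on the central subgroup; in particular $\rho'_0$ sends the nontrivial element of $\Z/2\Z\subset\widehat{\Gamma}'$ to $-\I$. By Lemma~\ref{lemma:ExtZ2}, the class of $\widehat{\Gamma}'$ in $\Ext(\Gamma;\Z/2\Z)\simeq\Ext(\Gamma;\Z)/2\Ext(\Gamma;\Z)$ is the reduction modulo $2$ of $[b'x_0+\beta'_1x_1+\cdots+\beta'_nx_n]$.

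Next I would compare this with $\mfd$ itself. The presentation of $\pi_1(\mfd)$ exhibits $\pi_1(\mfd)/\langle h^2\rangle$ as a central $\Z/2\Z$-extension $\widehat{\Gamma}$ of $\Gamma$ (here $h$ has infinite order, so $\langle h\rangle/\langle h^2\rangle\simeq\Z/2\Z$), and by Lemma~\ref{lemma:ExtZ2} its class in $\Ext(\Gamma;\Z/2\Z)$ is the reduction modulo $2$ of $[bx_0+\beta_1x_1+\cdots+\beta_nx_n]$. By hypothesis these two reductions coincide, so there is an isomorphism of central extensions $\Phi\colon\widehat{\Gamma}\xrightarrow{\sim}\widehat{\Gamma}'$ inducing the identity on $\Gamma$ and carrying the nontrivial element of $\Z/2\Z$ to the nontrivial element. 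I would then define $\rho'$ to be the composite
\[
\pi_1(\mfd)\longrightarrow\pi_1(\mfd)/\langle h^2\rangle=\widehat{\Gamma}\xrightarrow{\Phi}\widehat{\Gamma}'\xrightarrow{\rho'_0}\SLR .
\]
This is an $\SLR$-representation of $\pi_1(\mfd)$, and tracing $h$ through the composite — it becomes the nontrivial class of $\Z/2\Z$ in $\widehat{\Gamma}$, then the nontrivial class in $\widehat{\Gamma}'$, then $-\I$ — shows $\rho'(h)=-\I$, which is the assertion.

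The main obstacle will be the bookkeeping underlying the passage between the two descriptions: one must check that the $\Z/2\Z$-extension obtained from the Seifert presentation of $\pi_1(\mfd)$ by killing $h^2$ is exactly the extension which the isomorphism $\Ext(\Gamma;\Z/2\Z)\simeq\Ext(\Gamma;\Z)/2\Ext(\Gamma;\Z)$ of Lemma~\ref{lemma:ExtZ2} attaches to $[bx_0+\cdots+\beta_nx_n]$, and likewise for $\widehat{\Gamma}'$ and $[b'x_0+\cdots+\beta'_nx_n]$, so that equality of the two reductions modulo $2$ genuinely produces an isomorphism $\Phi$ of extensions fixing $\Gamma$ and matching the $\Z/2\Z$-subgroups. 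This amounts to unwinding the functions $\nu$ and $\nu'$ from the proof of Lemma~\ref{lemma:ExtZ2}; once that identification is settled, the existence of $\Phi$ and the pull-back of $\rho'_0$ along $\Phi$ are purely formal, and the asymptotics of $\Tor{\mfd}{\rho'_{2N}}$ can then be extracted exactly as in Section~\ref{sec:asymptotics_SL2R}.
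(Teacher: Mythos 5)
Your argument is correct and is essentially the paper's own proof: the paper realizes the primed class by a $\TPSLR$-representation of $\pi_1(M')$ for the Seifert manifold $M'$ with index $(g;(1,b'),(\alpha_1,\beta'_1),\ldots,(\alpha_n,\beta'_n))$ and then transports the resulting $\SLR$-representation through the isomorphism $\pi_1(M)/\langle h^2\rangle \simeq \pi_1(M')/\langle h'^2\rangle$ given by the hypothesis in $\Ext(\Gamma;\Z/2\Z)$, which is the same construction as your $\Phi$ and $\rho'_0$ (your pull-back extension $\widehat{\Gamma}'$ is exactly $\pi_1(M')/\langle h'^2\rangle$). The bookkeeping you flag, matching the Seifert presentations with the classes under Lemma~\ref{lemma:ExtZ2}, is left at the same implicit level in the paper's proof as in your proposal.
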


\begin{proof}
  It follows from the assumption that we have an isomorphism $\varphi$
  from $\pi_1(M) / \langle h^2 \rangle$ to $\pi_1(M') / \langle h'^2 \rangle$ as
  \[
  \setlength{\arraycolsep}{2pt}
  \begin{array}{ccccccccc}
    0 & \to & \Z / 2\Z & \to & \pi_1(M) / \langle h^2 \rangle & \to & \Gamma & \to & 1 \\
    &     & || &   & \quad \downarrow \raisebox{1pt}{${\scriptstyle \varphi}$} & & || &  \\
    0 & \to & \Z / 2\Z& \to & \pi_1(M') / \langle h'^2 \rangle & \to & \Gamma & \to & 1
  \end{array}
  \]
  where $M$ and $M'$ are the Seifert manifolds with the indices
  $(g; (1, b), (\alpha_1, \beta_1), \ldots, (\alpha_n, \beta_n))$ and
  $(g; (1, b'), (\alpha_1, \beta'_1), \ldots, (\alpha_n, \beta'_n))$.
  Since $b' x_0 + \beta'_1 x_1 + \cdots + \beta'_n x_n$ is an euler class in $H^2(\Gamma;Z) = \Ext(\Gamma;\Z)$,
  there exists a $\TPSLR$-representation $\trho'$ of $\pi_1(M')$.
  This $\TPSLR$-representation $\trho'$ gives an $\SLR$-representation $\rho'$ such that $\rho'(h') = -\I$.
  Taking the pull--back of the homomorphism from $\pi_1(M') / \langle h'^2 \rangle$ to $\SLR$ by $\varphi$,
  we obtain a homomorphism from $\pi_1(M) / \langle h^2 \rangle$ to $\SLR$.
  The composition with the projection from $\pi_1(M)$ gives an $\SLR$-representation of $\pi_1(M)$
  sending $h$ to $-\I$.
\end{proof}

\begin{remark}
  Although $(b, \beta_1, \ldots, \beta_n)$ is contained in
  the equivalent class of $(b', \beta'_1, \ldots, \beta'_n)$ in $\Ext(\Gamma;\Z/2\Z)$,
  the induced representation $\rho$ by $(b, \beta_1, \ldots, \beta_n)$
  is not necessarily conjugate to $\rho'$ induced by
  $(b', \beta'_1, \ldots, \beta'_n)$.
  We can see an example in Section~\ref{sec:examples}.
\end{remark}

\begin{proposition}
  \label{prop:euler_class_conjugacy_class}
  Suppose that
  $\bar\rho$ and $\bar\rho'$ are $\PSLR$-representations of $\Gamma$
  satisfying that
  $[e(\bar\rho)] = [e(\bar\rho')] = [b x_0 + \beta_1 x_1 + \cdots + \beta_n x_n]$
  in $\Ext(\Gamma;\Z/2\Z)$
  but $e(\bar\rho) \not = e(\bar\rho')$ in $\Ext(\Gamma;\Z)$.

  Then the $\SLR$-representation $\rho$ of $\pi_1(\Seifert)$ is not conjugate to
  $\rho'$.
\end{proposition}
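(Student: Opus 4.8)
The plan is to reduce the non-conjugacy of $\rho$ and $\rho'$ to the fact that the euler class is a conjugacy invariant of $\PSLR$-representations of $\Gamma$, which is contained in Theorem~\ref{thm:components_fuchsian}. First I would record that $\rho$ and $\rho'$ are $\SLR$-representations of the \emph{same} group $\pi_1(\Seifert)$, both sending the central element $h$ to $-\I$. Hence composing either one with the projection $\SLR \to \PSLR$ annihilates $h$ and factors through $\Gamma = \pi_1(\Seifert)/\langle h\rangle$, producing a $\PSLR$-representation of $\Gamma$. I claim these are exactly $\bar\rho$ and $\bar\rho'$. For $\rho$ this is immediate, since by construction $\rho = \trho$ followed by $\TPSLR \to \SLR$, and $\trho$ is a lift of $\bar\rho$.

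For $\rho'$ one must unwind the construction in the proof of Theorem~\ref{prop:Z2extension_representation}. There $\rho'$ on $\pi_1(\Seifert)$ is obtained by descending to $\pi_1(M)/\langle h^2\rangle$, transporting through the isomorphism $\varphi\co \pi_1(M)/\langle h^2\rangle \to \pi_1(M')/\langle h'^2\rangle$, and applying the $\SLR$-representation coming from $\trho'$ on $\pi_1(M')$. The diagram defining $\varphi$ shows $\varphi$ is compatible with the quotient maps onto $\Gamma$; therefore, composing this chain further with $\SLR \to \PSLR$ (which kills the image of $\Z/2\Z$) collapses everything to the map $\Gamma \to \PSLR$ underlying $\trho'$, i.e. to $\bar\rho'$.

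Next I would argue by contradiction. Suppose $g \in \SLR$ satisfies $g\rho g^{-1} = \rho'$. Projecting to $\PSLR$ and using the identifications above, the image $\bar g$ of $g$ conjugates $\bar\rho$ onto $\bar\rho'$, so $\bar\rho$ and $\bar\rho'$ are conjugate $\PSLR$-representations of $\Gamma$ (the same conclusion follows if one instead allows $\GL_2^+(\R)$-conjugacy, since that group is connected and maps onto $\PSLR$). Because $\PSLR$ is path-connected, the map $\PSLR \to \Hom(\Gamma,\PSLR)$, $u \mapsto u\bar\rho u^{-1}$, is a path from $\bar\rho$ to $\bar\rho'$, so the two representations lie in the same component of $\Hom(\Gamma,\PSLR)$. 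By Theorem~\ref{thm:components_fuchsian} the components are the fibers of the euler class $e$, hence $e(\bar\rho) = e(\bar\rho')$ in $\Ext(\Gamma;\Z) = H^2(\Gamma;\Z)$, contradicting the hypothesis. Thus $\rho$ and $\rho'$ cannot be conjugate.

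The main obstacle is the bookkeeping of the first two paragraphs: one must be sure that the $\PSLR$-representation of $\Gamma$ underlying $\rho'$ — built via the detour through $\pi_1(M')/\langle h'^2\rangle$ and the isomorphism $\varphi$ — is literally $\bar\rho'$, and not a twist of it by an automorphism of $\Gamma$ or by the $\Z/2\Z$-indeterminacy in the choice of $\varphi$. Once that identification is nailed down, the remainder is the standard connectedness argument together with Theorem~\ref{thm:components_fuchsian}; in particular no estimate on the Reidemeister torsion and no asymptotic input is required for this statement.
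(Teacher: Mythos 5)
Your proof is correct and follows essentially the same route as the paper: assume $\rho$ conjugate to $\rho'$, descend to $\Gamma$ to get $\bar\rho$ conjugate to $\bar\rho'$, and contradict $e(\bar\rho)\neq e(\bar\rho')$ using conjugation-invariance of the euler class. The only difference is in justification of that invariance --- the paper cites the proof of Jankins--Neumann's Theorem~1, while you rederive it from connectedness of $\PSLR$ together with Theorem~\ref{thm:components_fuchsian} --- and your extra bookkeeping identifying the $\PSLR$-representation underlying $\rho'$ with $\bar\rho'$ is sound.
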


\begin{proof}
  If $\rho$ were conjugate to $\rho'$,
  then $\bar\rho$ would be also conjugate to $\bar\rho'$.
  Since the euler class is invariant under conjugation
  (we refer to the proof of~\cite[Theorem~$1$]{JankinsNeumannHomomorphisms}),
  the euler class $e(\bar\rho)$ must coincide with $e(\bar\rho')$.
  This is a contradiction to the assumption that $e(\bar\rho) \not = e(\bar\rho')$.
\end{proof}

\begin{theorem}
  \label{thm:asymptotic_another_SLR_rep}
  Suppose that $M$ denotes a Seifert manifold $\Seifert$ and 
  integers $(b', \beta'_1, \ldots, \beta'_n)$ gives an euler class
  satisfying the criteria of Jankins and Neumann in Theorem~\ref{thm:criterion_JN}.
  If $(b, \beta_1, \ldots. \beta_n)$ and $(b', \beta'_1, \ldots, \beta'_n)$ give
  the same class in $\Ext(\Gamma;\Z/2\Z)$, then
  there exists an $\SLR$-representation $\rho'$ of $\pi_1(\mfd)$ such that
  the asymptotic behavior of $\Tor{\mfd}{\rho'_{2N}}$ is expressed as
  \[
  \lim_{N \to \infty}
  \frac{\log|\Tor{\mfd}{\rho'_{2N}}|
  }{2N}
  = 
  - \Big(2 -2g - \sum_{j=1}^{n} \frac{\lambda'_j - 1}{\lambda'_j}\Big) \log 2
  \]
  where $\lambda'_j = \alpha_j / (\alpha_j, \beta'_j)$.
\end{theorem}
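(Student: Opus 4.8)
\medskip
\noindent\textbf{Proof proposal.}
The plan is to manufacture the representation $\rho'$ directly from Theorem~\ref{prop:Z2extension_representation}, then to apply the asymptotic formula of Corollary~\ref{cor:previous_result} to the pair $(\mfd,\rho')$, and finally to identify the exponents $\lambda'_j$ occurring there with $\alpha_j/(\alpha_j,\beta'_j)$ by reusing the conjugacy computation of Theorem~\ref{thm:main_I}.

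First I would invoke Theorem~\ref{prop:Z2extension_representation}: since $(b',\beta'_1,\ldots,\beta'_n)$ satisfies the criteria of Theorem~\ref{thm:criterion_JN} and lies in the same class of $\Ext(\Gamma;\Z/2\Z)$ as $(b,\beta_1,\ldots,\beta_n)$, there is an $\SLR$-representation $\rho'$ of $\pi_1(\mfd)$ with $\rho'(h)=-\I$. I would then keep track of its construction from that proof: writing $M'$ for the Seifert manifold of index $(g;(1,b'),(\alpha_1,\beta'_1),\ldots,(\alpha_n,\beta'_n))$, there are a $\PSLR$-representation $\bar\rho'$ of $\Gamma$ with $e(\bar\rho')=b'x_0+\beta'_1 x_1+\cdots+\beta'_n x_n$, a $\TPSLR$-lift $\trho'$ of $\bar\rho'$ on $\pi_1(M')$, the induced $\SLR$-representation $\rho'_{M'}$ of $\pi_1(M')$ with $\rho'_{M'}(h')=-\I$, an isomorphism $\varphi\co \pi_1(\mfd)/\langle h^2\rangle \to \pi_1(M')/\langle h'^2\rangle$ covering $\mathrm{id}_\Gamma$, and $\rho'$ is the composite of the projection $\pi_1(\mfd)\to\pi_1(\mfd)/\langle h^2\rangle$, the map $\varphi$, and the descent of $\rho'_{M'}$.

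Next, because $\rho'(h)=-\I$, Corollary~\ref{cor:previous_result} (together with the vanishing of the $(2N)^2$-normalized limit from Theorem~\ref{thm:asymptotics_Rtorsion}) applies verbatim to $(\mfd,\rho')$ and gives
\[
\lim_{N\to\infty}\frac{\log|\Tor{\mfd}{\rho'_{2N}}|}{2N}=-\Big(2-2g-\sum_{j=1}^{n}\frac{\lambda'_j-1}{\lambda'_j}\Big)\log 2,
\]
where $\lambda'_j$ is the order in $\PSLR$ of $\overline{\rho'}(q_j)$ and $\overline{\rho'}$ denotes the $\PSLR$-representation of $\Gamma=\pi_1(\mfd)/\langle h\rangle$ induced by $\rho'$. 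It remains to compute $\lambda'_j$. Since $\varphi$ covers $\mathrm{id}_\Gamma$ and the bottom rows of the pull--back diagram coincide, $\overline{\rho'}$ is precisely $\bar\rho'$; concretely $\varphi(q_j)=q_j (h')^{\epsilon_j}$ in $\pi_1(M')/\langle h'^2\rangle$ for some $\epsilon_j\in\{0,1\}$, so $\rho'(q_j)=\pm\,\rho'_{M'}(q_j)$ and hence $\overline{\rho'}(q_j)=\bar\rho'(q_j)$ in $\PSLR$. Thus the order of $\overline{\rho'}(q_j)$ equals the order of $\bar\rho'(q_j)$, which by Lemma~\ref{lemma:q_j_to_b_j_a_j} and the conjugacy computation in the proof of Theorem~\ref{thm:main_I} (that $\rho'_{M'}(q_j)$ is conjugate in $\SLR$ to the rotation by $\pi\beta'_j/\alpha_j$) equals $\alpha_j/(\alpha_j,\beta'_j)$. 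Substituting $\lambda'_j=\alpha_j/(\alpha_j,\beta'_j)$ into the displayed limit finishes the argument.

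The only step that is not a direct citation is the identification $\overline{\rho'}(q_j)=\bar\rho'(q_j)$ in $\PSLR$ --- equivalently, that passing to the common $\Z/2\Z$-quotient and pulling back by $\varphi$ alters the images of the $q_j$ only by an overall sign, which is invisible in $\PSLR$. I expect this bookkeeping (tracing $\varphi$ on the generators $q_j$ modulo $\langle h'^2\rangle$, using that $\varphi$ is a morphism of central extensions of $\Gamma$) to be the main, and essentially only, obstacle; the growth order, the shape of the leading coefficient, and the value $\alpha_j/(\alpha_j,\beta'_j)$ are handed to us by Theorem~\ref{thm:asymptotics_Rtorsion}, Corollary~\ref{cor:previous_result}, and Theorem~\ref{thm:main_I} respectively.
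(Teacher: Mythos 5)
Your proposal is correct and takes essentially the approach the paper intends: Theorem~\ref{thm:asymptotic_another_SLR_rep} is left without a written proof precisely because it is the combination of Theorem~\ref{prop:Z2extension_representation} (existence of $\rho'$ with $\rho'(h)=-\I$), Corollary~\ref{cor:previous_result} applied to $(\mfd,\rho')$, and the computation $\lambda'_j=\alpha_j/(\alpha_j,\beta'_j)$ via Lemma~\ref{lemma:q_j_to_b_j_a_j} and Theorem~\ref{thm:main_I}, which is exactly what you assemble. The one step you flag as non-citational --- that $\varphi$ covers $\mathrm{id}_\Gamma$, so the images of the $q_j$ change only by the central $\pm\I$ and hence the induced $\PSLR$-representation of $\Gamma$ is $\bar\rho'$ --- is the bookkeeping the paper leaves implicit, and you handle it correctly.
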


\section{Examples}
\label{sec:examples}
\subsection{$\SUII$-representations of a Brieskorn manifold}
It is known that $\SLR$ is conjugate to 
$\SUII
= \left\{\left.
\begin{pmatrix}
  \xi & \eta \\
  \bar \eta & \bar \xi
\end{pmatrix}\,\right|\,
|\xi|^2 - |\eta|^2 = 1
\right\}$
by
$\begin{pmatrix}
  1 & -\sqrt{-1} \\
  1 & \sqrt{-1}
\end{pmatrix}$.
We can consider $\SUII$-representations 
instead of $\SLR$.

Suppose that $\mfd$ is a Seifert homology sphere with three exceptional fibers.
The genus of the base orbifold must be zero. The Seifert index is given by 
\[
(0; (1, b), (\alpha_1, \beta_1), (\alpha_2, \beta_2), (\alpha_3, \beta_3)).
\]
We can express $\pi_1(\mfd)$ as
\[
\pi_1(\mfd) =
\langle
q_1, q_2, q_3, h \,|\,
\hbox{$h$:central},
q_1^{\alpha_j} = h^{\beta_j} (j=1, 2, 3), q_1 q_2 q_3 = h^{-b}
\rangle.
\]
We write $\begin{pmatrix}
  \xi_j & \eta_j \\
  \bar\eta_j & \bar\xi_j 
\end{pmatrix}$
for $\rho(q_j)$ 
and $a_j + b_j \sqrt{-1}$
for each $\xi_j$.

We will compute irreducible $\SUII$-representations of $\pi_1(\mfd)$
such that $\rho(h) = -\I$ up to conjugation.
\begin{definition}
  \label{def:triple_for_rep}
  For an irreducible $\SUII$-representation of $\pi_1(\mfd)$,
  we have the triple $(k_1, k_2, k_3)$ of natural numbers such that 
  \begin{itemize}
  \item $\trace \rho(q_j) = 2 \cos (k_j \pi / \alpha_j)$;
  \item $0 < k_j < \alpha_j$;
  \item $k_j \equiv \beta_j \bmod 2$.
  \end{itemize}
\end{definition}

\begin{lemma}
  \label{lemma:conj_class_SUII}
  Let $\rho$ be an irreducible $\SUII$-representation of $\pi_1(\mfd)$.
  The conjugacy class of $\rho$ is determined by 
  the pair of the triple $(k_1, k_2, k_3)$ and the sign of $b_1$.
\end{lemma}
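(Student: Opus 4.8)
The plan is to write down explicit normal forms for the matrices $\rho(q_j)\in\SUII$ and then count the remaining freedom. Since $\rho$ is irreducible and $\rho(h)=-\I$, each $\rho(q_j)$ satisfies $\rho(q_j)^{\alpha_j}=h^{\beta_j}=(-\I)^{\beta_j}$, so its eigenvalues are roots of $\pm1$; together with the constraint $0<k_j<\alpha_j$ and the parity condition $k_j\equiv\beta_j\bmod 2$ this is exactly the content of Definition~\ref{def:triple_for_rep}. Fixing the triple $(k_1,k_2,k_3)$ therefore fixes $\trace\rho(q_j)=2\cos(k_j\pi/\alpha_j)$ for each $j$, hence fixes the conjugacy class of each individual matrix $\rho(q_j)$ in $\SUII$. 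The first step is thus to record that, up to an overall conjugation by an element of $\SUII$, we may assume $\rho(q_1)$ is the diagonal matrix $\mathrm{diag}(e^{\sqrt{-1}k_1\pi/\alpha_1},e^{-\sqrt{-1}k_1\pi/\alpha_1})$; this uses up the conjugation freedom except for the residual stabilizer of a diagonal matrix, namely the diagonal torus in $\SUII$.

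Next I would pin down $\rho(q_2)$ and $\rho(q_3)$. Write $\rho(q_j)=\begin{pmatrix}\xi_j&\eta_j\\\bar\eta_j&\bar\xi_j\end{pmatrix}$ with $\xi_j=a_j+b_j\sqrt{-1}$; the trace condition gives $a_j=\cos(k_j\pi/\alpha_j)$, so $b_j^2+|\eta_j|^2=\sin^2(k_j\pi/\alpha_j)$ is determined. The relation $q_1q_2q_3=h^{-b}=(-\I)^{-b}$ forces $\rho(q_3)=(-\I)^{-b}\rho(q_2)^{-1}\rho(q_1)^{-1}$, so $\rho(q_3)$ is completely determined by $\rho(q_1)$ and $\rho(q_2)$. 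Taking the trace of this relation yields one real equation relating $b_2$ and the real/imaginary parts of $\eta_2$ (with $\rho(q_1)$ already normalized); combined with $|\eta_2|^2=\sin^2(k_2\pi/\alpha_2)-b_2^2$ this cuts the possible $\rho(q_2)$ down to a finite union of circles acted on by the residual diagonal torus. Using the torus to rotate $\eta_2$ to be real and nonnegative, one is left with finitely many choices, and the trace equation then determines $b_2$ up to sign; irreducibility rules out the degenerate configurations (where some $\eta_j$ vanishes and all matrices become simultaneously diagonal), leaving exactly the sign of $b_2$ as a discrete invariant. Finally I would check that the sign of $b_2$ and the sign of $b_1$ carry the same information once the normalization $\rho(q_1)$ diagonal, $\eta_2\ge 0$ is imposed — i.e.\ the conjugation taking one normal form to another preserves or reverses all these signs simultaneously — so that the conjugacy class is recorded by $(k_1,k_2,k_3)$ together with a single sign, which we may take to be $\mathrm{sign}(b_1)$ after undoing the normalization, equivalently $\mathrm{sign}(b_2)$ in the normalized picture.

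The main obstacle is the bookkeeping in the last step: one has to verify that once the triple $(k_1,k_2,k_3)$ is fixed, the \emph{only} remaining conjugacy-invariant is a single $\Z/2$-ambiguity, and that it is faithfully detected by $\mathrm{sign}(b_1)$ rather than, say, splitting further. Concretely the risk is that the trace equation coming from $q_1q_2q_3=h^{-b}$ might admit no solution or a whole interval of solutions for certain triples; the former would mean the triple is not realized (consistent with the statement, which only claims \emph{if} $\rho$ exists), and the latter would have to be excluded using that $\SUII$ is three-dimensional and the complex reflection at $q_1$ together with the product relation leaves only a one-parameter family before the torus action — a dimension count. I would handle this by direct computation: parametrize $\eta_2=r e^{\sqrt{-1}\theta}$, use the torus to set $\theta=0$, and solve the resulting two real equations (trace of $\rho(q_2)$ and trace of $\rho(q_1)\rho(q_2)\rho(q_3)=\pm 2$) for $(b_2,r)$, observing that generically there are exactly two solutions related by $b_2\mapsto -b_2$, while the non-generic cases force reducibility. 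The identity $\SLR\cong\SUII$ recalled just above Definition~\ref{def:triple_for_rep} is what lets us do all of this inside the compact-looking group $\SUII$, where the diagonal torus normalization is transparent.
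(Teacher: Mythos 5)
Your overall strategy is the same as the paper's (diagonalize $\rho(q_1)$, use the residual diagonal torus to make $\eta_2$ real and nonnegative, read $b_2$ off the trace of the product relation), but there is a genuine error at the very first normalization, and it sits exactly where the content of the lemma lies. You claim that up to conjugation in $\SUII$ you may take $\rho(q_1)=\mathrm{diag}(e^{\sqrt{-1}k_1\pi/\alpha_1},e^{-\sqrt{-1}k_1\pi/\alpha_1})$, i.e.\ that you may also fix the sign of $b_1$. In $\SUII$ this is false: $\mathrm{diag}(e^{\sqrt{-1}\theta},e^{-\sqrt{-1}\theta})$ and $\mathrm{diag}(e^{-\sqrt{-1}\theta},e^{\sqrt{-1}\theta})$ are \emph{not} conjugate, because the Weyl element interchanging the eigenvalues lies in $\mathrm{SU}(2)$ but not in $\SUII$ (this is precisely the remark the paper places right after the lemma). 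If your normalization were legitimate, your own computation would prove too much: with $b_1=+\sin(k_1\pi/\alpha_1)$ fixed, the relation $q_1q_2=h^{-b}q_3^{-1}$ gives the \emph{linear} trace equation $a_1a_2-b_1b_2=(-1)^b\cos(k_3\pi/\alpha_3)$, which determines $b_2$ uniquely (not ``up to sign,'' as you write), and then $\eta_2>0$ and $\rho(q_3)$ are forced; the conjugacy class would be determined by the triple alone, contradicting the statement and the later count of two classes for the Brieskorn manifold of type $(2,3,7)$. The correct bookkeeping is the paper's: one may only assume $\rho(q_1)$ diagonal, so $b_1=\pm\sin(k_1\pi/\alpha_1)$ with the sign a genuine conjugacy invariant; once that sign is chosen, the trace equation determines $b_2$, and the normalization $\eta_2$ real positive then pins down $\eta_2$.

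A second, smaller slip compounds this: you use the $\mathrm{SU}(2)$-type relation $|\eta_2|^2=\sin^2(k_2\pi/\alpha_2)-b_2^2$, whereas in $\SUII$ the defining condition is $|\xi_2|^2-|\eta_2|^2=1$, so $|\eta_2|^2=b_2^2-\sin^2(k_2\pi/\alpha_2)$. In particular $b_2^2\ge\sin^2(k_2\pi/\alpha_2)$ (so $b_2\neq 0$), which is exactly the inequality the paper exploits in the proposition following this lemma; with your sign convention the ``finite union of circles'' picture and the solvability discussion in your last paragraph do not go through as stated.
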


\begin{proof}
  By the relations of $\pi_1(\mfd)$, the representation of $\rho$ is determined by
  $\xi_1$, $\eta_1$, $\xi_2$ and $\eta_2$.
  We can assume that $\rho(q_1)$ is diagonal \ie $\eta_1 =0$, and $\eta_2$ is a positive real number.
  The second assumption is realized by the conjugation of a diagonal matrix.
  Then $a_1$ equals to $\cos (k_1 \pi / \alpha_1)$ and $b_1 = \pm \sin (k_1 \pi / \alpha_1)$. 
  Since $\trace \rho(q_2)$ equals to $2a_2$, we also have that $a_2 = \cos (k_2 \pi / \alpha_2)$.
  By the relation that $q_1q_2 = h^{-b} q_3^{-1}$, we have that 
  \[
  \trace \rho(q_3) = \trace \rho(q_3)^{-1} = (-1)^b 2 (a_1 a_2 - b_1 b_2).
  \]
  Together with $\trace \rho(q_3) = 2\cos (k_3 \pi / \alpha_3)$, we can see that $b_2$ is determined
  by $(k_1, k_2, k_3)$ and the sign of $b_2$.
  Last $\eta_2$  is given by the equality that $a_2^2 + b_2^2 - \eta_2^2 = 1$.
\end{proof}

\begin{remark}
  The diagonal matrix
  $\begin{pmatrix}
    e^{\sqrt{-1} \theta} & 0 \\
    0 & e^{-\sqrt{-1} \theta} 
  \end{pmatrix}$
  is not conjugate to 
  $\begin{pmatrix}
    e^{-\sqrt{-1} \theta} & 0 \\
    0 & e^{\sqrt{-1} \theta} 
  \end{pmatrix}$
  in $\SUII$.
\end{remark}

\begin{proposition}
  Suppose that an $\SUII$-representation $\rho$ of $\pi_1(\mfd)$ is irreducible and
  satisfies that $\rho(h) = -\I$.
  
  If $b$ is even, then the corresponding triple $(k_1, k_2, k_3)$ satisfies the either inequality:
  \begin{equation}
    \label{eqn:constrain_b_even}
    0 < \frac{k_3}{\alpha_3} \leq \left| \frac{k_1}{\alpha_1} - \frac{k_2}{\alpha_2} \right|
    \quad \text{or} \quad
    1 - \left| \frac{k_1}{\alpha_1} + \frac{k_2}{\alpha_2} - 1 \right| \leq \frac{k_3}{\alpha_3} < 1.
  \end{equation}

  If $b$ is odd, then the corresponding triple $(k_1, k_2, k_3)$ satisfies the either inequality:
  \begin{equation}
    \label{eqn:constrain_b_odd}
    0 < \frac{k_3}{\alpha_3} \leq \left| \frac{k_1}{\alpha_1} + \frac{k_2}{\alpha_2} - 1 \right|
    \quad \text{or} \quad
    1 - \left| \frac{k_1}{\alpha_1} - \frac{k_2}{\alpha_2}\right| \leq \frac{k_3}{\alpha_3} < 1.
  \end{equation}
\end{proposition}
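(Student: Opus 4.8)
The plan is to unwind the normalization already used in the proof of Lemma~\ref{lemma:conj_class_SUII} and then read off the one scalar inequality forced by the condition $\rho(q_2)\in\SUII$. Write $\phi_j = k_j\pi/\alpha_j$, so that $0<\phi_j<\pi$ and $\trace\rho(q_j) = 2\cos\phi_j$ for $j = 1,2,3$; set also $r_j = k_j/\alpha_j = \phi_j/\pi\in(0,1)$, and recall the notation $\xi_j,\eta_j$ for the entries of $\rho(q_j)$ and $\xi_j = a_j + b_j\sqrt{-1}$ from before Definition~\ref{def:triple_for_rep}. After conjugating so that $\rho(q_1)$ is diagonal and $\eta_2$ is real, we get $a_1 = \cos\phi_1$, $a_2 = \cos\phi_2$, and, from $|\xi_1|^2 = 1$, that $b_1^2 = \sin^2\phi_1$. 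The relation $q_1 q_2 q_3 = h^{-b}$ with $\rho(h) = -\I$ gives, exactly as in the proof of Lemma~\ref{lemma:conj_class_SUII},
\[
2\cos\phi_3 \;=\; \trace\rho(q_3) \;=\; (-1)^b\cdot 2\bigl(a_1 a_2 - b_1 b_2\bigr),
\]
hence $b_1 b_2 = \cos\phi_1\cos\phi_2 - (-1)^b\cos\phi_3$. Finally, $\rho(q_2)\in\SUII$ means $|\xi_2|^2 - |\eta_2|^2 = 1$, so $|\eta_2|^2\ge 0$ forces $a_2^2 + b_2^2\ge 1$, \ie $b_2^2\ge\sin^2\phi_2$; substituting $b_2 = (\cos\phi_1\cos\phi_2 - (-1)^b\cos\phi_3)/b_1$ and $b_1^2 = \sin^2\phi_1$ this becomes
\[
\bigl(\cos\phi_1\cos\phi_2 - (-1)^b\cos\phi_3\bigr)^2 \;\ge\; \sin^2\phi_1\,\sin^2\phi_2 .
\]

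The next step is to factor this inequality. Setting $w = (-1)^b\cos\phi_3$, the difference of the two sides equals $w^2 - 2(\cos\phi_1\cos\phi_2)\,w + (\cos^2\phi_1 + \cos^2\phi_2 - 1)$; its discriminant is $4\sin^2\phi_1\sin^2\phi_2$ and its roots are $\cos\phi_1\cos\phi_2 \pm \sin\phi_1\sin\phi_2 = \cos(\phi_1\mp\phi_2)$. Since $\cos(\phi_1-\phi_2) - \cos(\phi_1+\phi_2) = 2\sin\phi_1\sin\phi_2 > 0$, the inequality holds if and only if
\[
w \le \cos(\phi_1+\phi_2) \qquad\text{or}\qquad w \ge \cos(\phi_1-\phi_2).
\]
I would then translate these two alternatives by monotonicity of $\cos$ on $[0,\pi]$, using $\cos(\phi_1-\phi_2) = \cos(\pi|r_1-r_2|)$ and $\cos(\phi_1+\phi_2) = \cos\bigl(\pi(1-|r_1+r_2-1|)\bigr)$ (the second since $\cos(2\pi-\theta) = \cos\theta$ brings the argument back into $(0,\pi]$), together with $w = \cos(\pi r_3)$ when $b$ is even and $w = -\cos(\pi r_3) = \cos(\pi(1-r_3))$ when $b$ is odd. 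For $b$ even the two alternatives become $0 < r_3\le|r_1-r_2|$ and $1 - |r_1+r_2-1|\le r_3 < 1$, which is \eqref{eqn:constrain_b_even}; for $b$ odd they are interchanged, giving $0 < r_3\le|r_1+r_2-1|$ and $1 - |r_1-r_2|\le r_3 < 1$, which is \eqref{eqn:constrain_b_odd}.

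The only delicate point is this last translation: $\phi_1+\phi_2$ may exceed $\pi$, so one must rewrite $\cos(\phi_1+\phi_2)$ with its argument back inside $(0,\pi]$ before inverting the cosine, and one has to keep track of how the factor $(-1)^b$, once absorbed into $w$, swaps the two alternatives in the odd case. Beyond the single quadratic inequality there is no analytic content; the rest is bookkeeping. (One may note that irreducibility forces $\eta_2\ne 0$, so the inequalities are actually strict and the endpoints correspond to reducible representations; the non-strict form stated here already follows from $|\eta_2|^2\ge 0$.)
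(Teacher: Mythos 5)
Your proof is correct and takes essentially the same route as the paper: conjugate so that $\rho(q_1)$ is diagonal, use the trace relation coming from $q_1q_2q_3=h^{-b}$ with $\rho(h)=-\I$, and extract $b_2^2\geq\sin^2(k_2\pi/\alpha_2)$ from the $\SUII$ condition $|\xi_2|^2=1+|\eta_2|^2\geq 1$. You simply carry out explicitly (via the quadratic in $(-1)^b\cos(k_3\pi/\alpha_3)$, handling both parities of $b$ at once) the final translation into \eqref{eqn:constrain_b_even} and \eqref{eqn:constrain_b_odd} that the paper leaves as ``we can derive our inequality'' and ``similarly'' for the odd case.
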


\begin{proof}
  We prove the case that $b$ is even.
  In this case, we have the equality that $\rho(q_3) = \rho(q_2)^{-1}\rho(q_1)^{-1}$. 
  We can assume
  $\rho(q_1)$ is diagonal, \ie
  \[
  \rho(q_1)=
  \begin{pmatrix}
    \xi_1 & 0 \\
    0 & \bar \xi_1
  \end{pmatrix}
  \]
  without loss of generality.
  Hence the trace $\trace \rho(q_3) = 2 \cos(k_3 \pi / \alpha_3)$ equals to 
  $\xi_1 \xi_2 + \bar\xi_1 \bar\xi_2 = 2(a_1 a_2 - b_1 b_2)$.
  We have that 
  \begin{equation}
    \label{eqn:cos_el_3}
    a_1 a_2 - b_1 b_2 = \cos \Big(\frac{k_3 \pi}{\alpha_3}\Big).
  \end{equation}
  It follows from the assumption that $a_1 = \cos(k_1 \pi / \alpha_1)$, $b_1 = \pm \sin (k_1 \pi / \alpha_1)$.
  We also have the inequality that $b_2^2 \geq \sin^2 (k_2 \pi / \alpha_2)$
  from 
  $|\xi_2|^2 = 1 + |\eta_2|^2 \geq 1$,
  $|\xi_2|^2 = a_2^2 + b_2^2$ and $a_2 = \cos(k_2 \pi / \alpha_2)$.
  Together with Eq.~\eqref{eqn:cos_el_3}, we obtain the following constrains:
  \begin{gather*}
    \cos\Big(k_1 \pi / \alpha_1\Big)\cos\Big(k_2 \pi / \alpha_2\Big)
    \mp \sin\Big(k_1 \pi / \alpha_1\Big) b_2
    \geq \cos\Big(k_3 \pi / \alpha_3\Big), \\
    b_2^2 \geq \sin^2\Big(k_2 \pi / \alpha_2\Big).
  \end{gather*}
  We can derive our inequality~\eqref{eqn:constrain_b_even} from the above constrains.
  Similarly we can also derive the inequality~\eqref{eqn:constrain_b_odd} in the case that $b$ is odd.
\end{proof}

\begin{remark}
  One can find the similar inequality for $\SU$-representations
  in~\cite{FintushelStern:InstantonSeifert}, \cite[\S$14.5$]{Saveliev99:LectureTopology3mfd}.
\end{remark}

\subsection{Brieskorn manifold of type $(2, 3, 7)$}
Let $M$ be the Seifert manifold of the index
$(g; (1, b), (\alpha_1, \beta_1), (\alpha_2, \beta_2), (\alpha_3, \beta_3))
= (0; (1, -1), (2, 1), (3, 1), (7, 1))$.
Then the fundamental group $\pi_1(\mfd)$ is expressed as
\[
\pi_1(\mfd) =
\langle
q_1, q_2, q_3, h \,|\,
\hbox{$h$:central},
q_1^2 = h, q_3^3 = h, q_3^7 = h, q_1 q_2 q_3 = h^{-(-1)}
\rangle.
\]

\begin{lemma}
  There are two conjugacy classes of irreducible $\SUII$-representations $\rho$
  of $\pi_1(\mfd)$ such that $\rho(h)=-\I$.
\end{lemma}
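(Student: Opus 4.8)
The plan is to run the triple-parameter analysis of Definition~\ref{def:triple_for_rep}: first list the admissible triples $(k_1,k_2,k_3)$, cut them down to one, and then count how many conjugacy classes realize that triple.

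\textbf{Step 1: the only admissible triple is $(1,1,1)$.} Here $(\alpha_1,\alpha_2,\alpha_3)=(2,3,7)$, $\beta_1=\beta_2=\beta_3=1$, and $b=-1$. Any triple attached to an irreducible $\SUII$-representation $\rho$ with $\rho(h)=-\I$ must have each $k_j$ odd with $0<k_j<\alpha_j$, which already forces $k_1=k_2=1$ and leaves $k_3\in\{1,3,5\}$. Since $b=-1$ is odd, the preceding Proposition applies through inequality~\eqref{eqn:constrain_b_odd}; with $k_1/\alpha_1=1/2$ and $k_2/\alpha_2=1/3$ we have $|1/2+1/3-1|=|1/2-1/3|=1/6$, so the constraint reads $0<k_3/7\le 1/6$ or $5/6\le k_3/7<1$. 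The first interval forces $k_3=1$, the second forces $k_3=6$, which is even and thus excluded by the parity condition $k_3\equiv\beta_3\bmod 2$. Hence $(k_1,k_2,k_3)=(1,1,1)$.

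\textbf{Step 2: $(1,1,1)$ is realized, by exactly two conjugacy classes.} Following the normalization used in the proof of Lemma~\ref{lemma:conj_class_SUII}, put $\rho(q_1)=\mathrm{diag}(\epsilon\sqrt{-1},-\epsilon\sqrt{-1})$ with $\epsilon=\pm 1$ (so $a_1=\cos(\pi/2)=0$, $b_1=\epsilon$) and take $\eta_2$ a nonnegative real; then $a_2=\cos(\pi/3)=1/2$. The relation $q_1q_2q_3=h^{-b}=h$ forces $\rho(q_3)=-\rho(q_2)^{-1}\rho(q_1)^{-1}$, and a short computation gives $\trace\rho(q_3)=2\epsilon b_2$; equating with $2\cos(\pi/7)$ yields $b_2=\epsilon\cos(\pi/7)$, hence $|\eta_2|^2=a_2^2+b_2^2-1=\cos^2(\pi/7)-3/4$. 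The key numerical fact is $\cos(\pi/7)>\sqrt{3}/2$, so this is strictly positive and $\eta_2=\sqrt{\cos^2(\pi/7)-3/4}>0$ is a legitimate value for each sign $\epsilon$. One then checks the defining relations hold: the matrices $\rho(q_1),\rho(q_2),\rho(q_3)$ have determinant $1$ and traces $2\cos(\pi/2),2\cos(\pi/3),2\cos(\pi/7)$, so their eigenvalues are $e^{\pm\sqrt{-1}\,\pi/2},e^{\pm\sqrt{-1}\,\pi/3},e^{\pm\sqrt{-1}\,\pi/7}$ and they satisfy $\rho(q_j)^{\alpha_j}=-\I=\rho(h)$, while $q_1q_2q_3=h$ holds by the definition of $\rho(q_3)$. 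Since $\eta_2\neq 0$, $\rho(q_2)$ is not diagonal whereas $\rho(q_1)$ has distinct eigenvalues, so $\rho$ is irreducible.

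\textbf{Step 3: the two are inequivalent, and exhaust the list.} The representations for $\epsilon=+1$ and $\epsilon=-1$ send $q_1$ to $\mathrm{diag}(\sqrt{-1},-\sqrt{-1})$ and $\mathrm{diag}(-\sqrt{-1},\sqrt{-1})$ respectively, which are not conjugate in $\SUII$ by the Remark following Lemma~\ref{lemma:conj_class_SUII}; hence $\rho_{+}$ and $\rho_{-}$ are not conjugate. Conversely, by Lemma~\ref{lemma:conj_class_SUII} an irreducible $\SUII$-representation with $\rho(h)=-\I$ is determined up to conjugacy by its triple together with the sign of $b_1$; we have shown the triple must be $(1,1,1)$ and that both signs occur, so there are exactly two conjugacy classes. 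The only genuinely nontrivial point in the argument is the inequality $\cos^2(\pi/7)>3/4$, which is what makes both signs realizable; the rest is bookkeeping against the lemmas and proposition already established.
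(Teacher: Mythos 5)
Your proof is correct and follows essentially the same route as the paper: the parity condition and the $b$-odd inequality~\eqref{eqn:constrain_b_odd} force the triple $(1,1,1)$, and Lemma~\ref{lemma:conj_class_SUII} together with the Remark on non-conjugate diagonal matrices gives exactly one class per sign of $b_1$. Your Step~2, explicitly realizing both signs via $\eta_2^2=\cos^2(\pi/7)-3/4>0$, supplies an existence check that the paper's own proof leaves implicit, but it does not change the underlying argument.
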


\begin{proof}
  We have the triple $(k_1, k_2, k_3)$ of natural numbers
  for the $\SUII$-representation $\rho$, as in Definition~\ref{def:triple_for_rep}.
  This triple satisfies the constrain~\eqref{eqn:constrain_b_odd}.
  Therefore we have only one triple $(k_1, k_2, k_3) = (1, 1, 1)$.
  Since we have two possibility of the sign of $b_1$, we can conclude that
  there are the two conjugacy classes of irreducible $\SUII$-representations
  of $\pi_1(\mfd)$.
\end{proof}

We see the correspondence between the conjugacy classes and
the euler classes of $\PSLR$-representations of $\Gamma$.
From Theorem~\ref{prop:Z2extension_representation}
it is enough to find equivalent classes $[(b', \beta'_1, \ldots, \beta'_3)]$
in $\Ext(\Gamma;\Z/2\Z)$, which satisfy the criteria of
Theorem~\ref{thm:criterion_JN}.
Since the genus $g$ in the Seifert index is equal to $0$ and
the number $n$ of exceptional fibers is equal to $3$, 
we consider the cases~\eqref{item:b_eq_minus_one} and~\eqref{item:b_eq_one_minus_n}
in Theorem~\ref{thm:criterion_JN}.

\begin{itemize}
\item[\eqref{item:b_eq_minus_one}] We suppose that $b'=-1$.
  If we find $(\beta'_1, \beta'_2, \beta'_3)$ such that
  $\beta'_1 / 2 + \beta'_2 / 3 + \beta'_3 / 7 \leq 1$,
  then we have only solution $(1, 1, 1)$.
  Since these integers $(b', \beta'_1, \beta'_2, \beta'_3$ coincides with
  $(b, \beta_1, \beta_2, \beta_3) = (-1, 1, 1, 1)$ in the Seifert index of $\mfd$,
  this solution gives an $\TPSLR$-representation of $\pi_1(\mfd)$.
\item[\eqref{item:b_eq_one_minus_n}]
  We suppose that $b'=-2$.
  If we find $(\beta'_1, \beta'_2, \beta'_3)$ such that
  $\beta'_1 / 2 + \beta'_2 / 3 + \beta'_3 / 7 \geq n-1 = 2$,
  then we have only solution $(1, 2, 6)$.
  This solution induces an $\TPSLR$-representation of $\pi_1(\mfd')$
  where $M'$ is the Seifert manifold with the index of
  \[
  (g; (1, b), (\alpha_1, \beta_1), (\alpha_2, \beta_2), (\alpha_n, \beta_n))
  = (0; (1, -2), (2, 1), (3, 2), (7, 6)).
  \]
\end{itemize}

\begin{remark}
  \label{remark:isom_manifolds}
  The Seifert manifold $\mfd'$ in~\eqref{item:b_eq_one_minus_n} is
  obtained by the homeomorphism by reversing the orientation of a fiber in $M$.
  This is due to that 
  the Seifert index of $\mfd'$ is obtained from $\mfd$ as follows:
  \begin{align*}
    (0; (1, -1), (2, 1), (3, 1), (7, 1))
    &\xrightarrow{ori.~rev.}
    (0; (1, 1), (2, -1), (3, -1), (7, -1)) \\
    &=
    (0; (1, 1-3), (2, -1+2), (3, -1+3), (7, -1+7)) \\
    &=
    (0; (1, -2), (2, 1), (3, 2), (7, 6)).
  \end{align*}
  We refer to~\cite[Theorem~$1.5$]{NeumannJankins:Seifert}
  for the above operations of Seifert index.
\end{remark}

\begin{lemma}
  The isomorphism from $\pi_1(\mfd)$ to $\pi_1(\mfd')$
  induced by the homeomorphism in Remark~\ref{remark:isom_manifolds}.
  is given by the following correspondence:
  \[
    h \mapsto h'^{-1}, \quad
    q_j \mapsto q'_j h'^{-1}.
  \] 
\end{lemma}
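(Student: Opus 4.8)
The plan is to verify directly, from the two Seifert presentations, that the stated formulas extend to a homomorphism $\Phi \co \pi_1(\mfd) \to \pi_1(\mfd')$, to exhibit an explicit two--sided inverse, and then to identify $\Phi$ with the isomorphism induced by the homeomorphism of Remark~\ref{remark:isom_manifolds} by decomposing that homeomorphism into the standard moves on Seifert invariants.

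First I would record the presentations coming from the Seifert indices $(0;(1,-1),(2,1),(3,1),(7,1))$ of $\mfd$ and $(0;(1,-2),(2,1),(3,2),(7,6))$ of $\mfd'$:
\[
\pi_1(\mfd)=\langle q_1,q_2,q_3,h \mid h \text{ central},\ q_1^2=h,\ q_2^3=h,\ q_3^7=h,\ q_1q_2q_3=h\rangle,
\]
\[
\pi_1(\mfd')=\langle q'_1,q'_2,q'_3,h' \mid h' \text{ central},\ (q'_1)^2=h',\ (q'_2)^3=(h')^2,\ (q'_3)^7=(h')^6,\ q'_1q'_2q'_3=(h')^2\rangle.
\]
Then I would check that $\Phi$, given on generators by $h\mapsto (h')^{-1}$ and $q_j\mapsto q'_j (h')^{-1}$, sends each defining relator of $\pi_1(\mfd)$ to a relator of $\pi_1(\mfd')$: centrality of $(h')^{-1}$ is clear, and
\[
\Phi(q_1)^2=\bigl(q'_1(h')^{-1}\bigr)^2=(q'_1)^2(h')^{-2}=h'(h')^{-2}=(h')^{-1}=\Phi(h),
\]
and likewise $\Phi(q_2)^3=(q'_2)^3(h')^{-3}=(h')^2(h')^{-3}=(h')^{-1}$ and $\Phi(q_3)^7=(q'_3)^7(h')^{-7}=(h')^6(h')^{-7}=(h')^{-1}$, while $\Phi(q_1q_2q_3)=q'_1q'_2q'_3(h')^{-3}=(h')^2(h')^{-3}=(h')^{-1}=\Phi(h)$. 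Hence $\Phi$ is a well--defined homomorphism. In exactly the same way, $h'\mapsto h^{-1}$, $q'_j\mapsto q_jh^{-1}$ defines a homomorphism $\Psi \co \pi_1(\mfd')\to\pi_1(\mfd)$, and on generators $\Psi\bigl(\Phi(h)\bigr)=\Psi\bigl((h')^{-1}\bigr)=h$ and $\Psi\bigl(\Phi(q_j)\bigr)=\Psi\bigl(q'_j(h')^{-1}\bigr)=q_jh^{-1}\cdot h=q_j$, so $\Psi\circ\Phi=\mathrm{id}$ and symmetrically $\Phi\circ\Psi=\mathrm{id}$; thus $\Phi$ is an isomorphism. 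As a consistency check, $\Phi$ carries the fiber subgroup $\langle h\rangle$ onto $\langle h'\rangle$ and descends to the identity on $\Gamma$, which is exactly what one expects of a map induced by a fiber--preserving homeomorphism covering the identity on the base orbifold.

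It then remains to see that $\Phi$ really is the map induced by the homeomorphism $\mfd\to\mfd'$ of Remark~\ref{remark:isom_manifolds}, which factors as the reversal of the orientation of a regular fiber followed by the three normalizing moves $(1,b),(\alpha_j,\beta_j)\mapsto(1,b-1),(\alpha_j,\beta_j+\alpha_j)$ recorded there. By \cite[Theorem~$1.5$]{NeumannJankins:Seifert}, fiber reversal induces the isomorphism that fixes each $q_j$ and sends $h$ to the inverse of the regular fiber class of the reversed manifold, while the move adding $\alpha_j$ to $\beta_j$ (and subtracting $1$ from $b$) corresponds to taking the new $j$-th generator to be the product of the old one with the fiber class. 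Composing the fiber reversal with these three moves and tracing the generators through, the original $q_j$ becomes $q'_j(h')^{-1}$ and $h$ becomes $(h')^{-1}$, which is precisely $\Phi$. I expect the only real obstacle to be the sign bookkeeping in this last step: ensuring that the direction of the fiber reversal and of each substitution combine so that the fiber class picks up an inverse rather than a positive power. Once the moves are written out this is routine, and the direct computation above already guarantees that the resulting assignment is a genuine isomorphism.
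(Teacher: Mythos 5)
Your proposal is correct and follows essentially the same route as the paper: the identification of the map with the induced isomorphism via fiber reversal (sending $h$ to the inverse fiber class) followed by the normalizing moves $q_j \mapsto q_j h'$ is exactly the paper's argument. Your additional direct verification that the formulas respect the two Seifert presentations and admit the inverse $h'\mapsto h^{-1}$, $q'_j\mapsto q_jh^{-1}$ is a sound (and slightly more careful) supplement that the paper leaves implicit.
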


\begin{proof}
  The homeomorphism by reversing the orientation of a fiber induces
  the correspondence sending $h$ to $h'^{-1}$.
  The change of Seifert index from $(0; (1, 1), (2, -1), (3, -1), (7, -1))$
  to $(0; (1, -2), (2, 1), (3, 2), (7, 6))$ corresponds to 
  changing the generators $q_j$ to $q_j h'_j$.
  If we write $q'_j$ for the new generators, then we have the presentation of $\pi_1(\mfd')$.
\end{proof}

\begin{proposition}
  Let $\rho$ and $\rho'$
  be irreducible $\SUII$-representations of $\pi_1(\mfd)$
  induced by $(b, \beta_1, \beta_2, \beta_3) = (-1, 1, 1, 1)$
  and $(b', \beta'_1, \beta'_2, \beta'_3) = (-2, 1, 1, 1)$.
  The conjugacy class of $\rho$ is different from that of $\rho'$.
\end{proposition}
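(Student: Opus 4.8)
The plan is to deduce the non-conjugacy directly from Proposition~\ref{prop:euler_class_conjugacy_class}: it suffices to pin down the two $\PSLR$-representations $\bar\rho,\bar\rho'$ of $\Gamma=\Gamma(0;2,3,7)$ that induce $\rho$ and $\rho'$, and to check that their euler classes coincide in $\Ext(\Gamma;\Z/2\Z)$ but differ in $\Ext(\Gamma;\Z)$.

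First I would identify the two euler classes. By construction $\rho$ is induced by a $\PSLR$-representation $\bar\rho$ of $\Gamma$ with $e(\bar\rho)=-x_0+x_1+x_2+x_3$, the euler class attached to $(b,\beta_1,\beta_2,\beta_3)=(-1,1,1,1)$. The representation $\rho'$ is the one produced via Theorem~\ref{prop:Z2extension_representation} from the $\Ext(\Gamma;\Z/2\Z)$-class of $(-2,1,1,1)$: its representative admissible in the sense of Theorem~\ref{thm:criterion_JN} is $(-2,1,2,6)$ (case~\eqref{item:b_eq_one_minus_n}), so $\rho'=\rho'_{M'}\circ\varphi$, where $\varphi\colon\pi_1(\mfd)\to\pi_1(\mfd')$ is the isomorphism of the preceding Lemma and $\rho'_{M'}$ is the $\SLR$-representation of $\pi_1(\mfd')$ coming from the $\PSLR$-representation $\bar\rho'$ of $\Gamma$ with $e(\bar\rho')=-2x_0+x_1+2x_2+6x_3$. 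Since $\varphi$ sends $h$ to $(h')^{-1}$ and $q_j$ to $q'_j(h')^{-1}$, it descends to the identity of $\Gamma=\pi_1(\mfd)/\langle h\rangle=\pi_1(\mfd')/\langle h'\rangle$; hence the $\PSLR$-representation of $\Gamma$ underlying $\rho'$ is $\bar\rho'$ itself, with euler class $-2x_0+x_1+2x_2+6x_3$.

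Next I would compare these classes in $\Ext(\Gamma;\Z)$. By~\eqref{eqn:proof_ext_Z} this group is $ab\langle x_0,x_1,x_2,x_3\mid 2x_1=x_0,\ 3x_2=x_0,\ 7x_3=x_0\rangle$, which is infinite cyclic because $2,3,7$ are pairwise coprime; the rule $x_0\mapsto42,\ x_1\mapsto21,\ x_2\mapsto14,\ x_3\mapsto6$ gives an isomorphism onto $\Z$, under which $e(\bar\rho)\mapsto-1$ and $e(\bar\rho')\mapsto1$. In particular $e(\bar\rho)\neq e(\bar\rho')$ in $\Ext(\Gamma;\Z)$, while $e(\bar\rho)-e(\bar\rho')\mapsto-2\in 2\Z$, so $[e(\bar\rho)]=[e(\bar\rho')]$ in $\Ext(\Gamma;\Z/2\Z)=\Ext(\Gamma;\Z)/2\Ext(\Gamma;\Z)$ by Lemma~\ref{lemma:ExtZ2}. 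Regarding $\rho,\rho'$ as $\SLR$-representations of $\pi_1(\mfd)$ through the fixed conjugacy between $\SLR$ and $\SUII$ (which also identifies $\SUII$-conjugacy with $\SLR$-conjugacy), Proposition~\ref{prop:euler_class_conjugacy_class} then gives that $\rho$ is not conjugate to $\rho'$.

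The delicate point I expect is the identification in the second paragraph: one must make sure that $\varphi$ descends to the \emph{identity} of $\Gamma$, so that the euler class recorded for $\rho'$ is genuinely $e(\bar\rho')$ and not its image under some automorphism of $\Gamma$; once that is settled the remainder is a one-line computation in the explicit cyclic group $\Ext(\Gamma;\Z)$. A more hands-on alternative, bypassing Proposition~\ref{prop:euler_class_conjugacy_class}, would be to compute the sign of the imaginary part of $\xi_1$ directly for $\rho$ and for $\rho'$ from the trace relations analysed in the proofs of the constraints \eqref{eqn:constrain_b_even}, \eqref{eqn:constrain_b_odd} and of Lemma~\ref{lemma:conj_class_SUII}, and to verify that the two signs differ; this works but is messier, since it requires carrying the auxiliary quantity $b_2$ through the computation.
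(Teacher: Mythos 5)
Your proposal is correct and follows essentially the same route as the paper: the paper's proof simply notes that $e(\bar\rho') = -e(\bar\rho)$ (hence the two classes agree in $\Ext(\Gamma;\Z/2\Z)$ but differ in $\Ext(\Gamma;\Z)$) and invokes Proposition~\ref{prop:euler_class_conjugacy_class}. Your explicit identification $\Ext(\Gamma;\Z)\simeq\Z$ via $x_0\mapsto 42$, $x_1\mapsto 21$, $x_2\mapsto 14$, $x_3\mapsto 6$, giving $e(\bar\rho)\mapsto -1$ and $e(\bar\rho')\mapsto 1$, and your check that $\varphi$ descends to the identity on $\Gamma$, just make the same argument fully explicit.
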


\begin{proof}
  The euler class of $e(\bar \rho')$ is equal to $- e(\bar \rho)$.
  It follows from Proposition~\ref{prop:euler_class_conjugacy_class}
  that $\rho$ is not conjugate to $\rho'$.
\end{proof}

\begin{corollary}
  The $\SUII$-representations $\rho$ and $\rho'$ give all representatives in
  the conjugacy classes of irreducible $\SUII$-representations sending $h$ to $-\I$.
\end{corollary}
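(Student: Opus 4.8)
The plan is to reduce the corollary to the enumeration already carried out in this subsection, so that it becomes a matter of ``$2=2$''. First I would recall that the lemma preceding Remark~\ref{remark:isom_manifolds} shows $\pi_1(\mfd)$ has exactly two conjugacy classes of irreducible $\SUII$-representations $\rho$ with $\rho(h)=-\I$: the constraint~\eqref{eqn:constrain_b_odd} forces the triple $(k_1,k_2,k_3)=(1,1,1)$, and Lemma~\ref{lemma:conj_class_SUII} then leaves only the two choices of the sign of $b_1$. Hence it suffices to show that $\rho$ and $\rho'$ are both irreducible $\SUII$-representations of $\pi_1(\mfd)$ sending $h$ to $-\I$, and that they are not conjugate.

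Next I would verify these three properties in turn. The integers $(b,\beta_1,\beta_2,\beta_3)=(-1,1,1,1)$ are the Seifert data of $\mfd$ itself and satisfy case~\eqref{item:b_eq_minus_one} of Theorem~\ref{thm:criterion_JN}, so Lemma~\ref{lemma:h_to_shift_1} yields a $\TPSLR$-representation $\trho$ of $\pi_1(\mfd)$ with $\trho(h)=\shift(1)$, and composing with the projection $\TPSLR\to\SLR\ (\simeq\SUII)$ gives $\rho$ with $\rho(h)=-\I$. The integers $(b',\beta'_1,\beta'_2,\beta'_3)=(-2,1,1,1)$ satisfy case~\eqref{item:b_eq_one_minus_n} of Theorem~\ref{thm:criterion_JN}, so they give a $\TPSLR$-representation of $\pi_1(\mfd')$ and hence an $\SUII$-representation of $\pi_1(\mfd')$ sending $h'$ to $-\I$; precomposing with the isomorphism $\pi_1(\mfd)\xrightarrow{\sim}\pi_1(\mfd')$ of the preceding lemma, which sends $h$ to $h'^{-1}$, produces $\rho'$ on $\pi_1(\mfd)$ with $\rho'(h)=(-\I)^{-1}=-\I$. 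For irreducibility I would observe that a reducible $\SUII$-representation of $\pi_1(\mfd)$ is conjugate, inside $\SLC$, to an upper-triangular one, whose diagonal entries define a character $\pi_1(\mfd)\to\C^\times$ that is trivial because $H_1(\mfd;\Z)=0$ (a homology sphere), after which the off-diagonal entry defines an additive homomorphism $\pi_1(\mfd)\to\C$ that is likewise trivial; such a representation sends $h$ to $\I$, so any $\SUII$-representation with $h\mapsto-\I$ — in particular $\rho$ and $\rho'$ — is automatically irreducible. Finally, non-conjugacy of $\rho$ and $\rho'$ is exactly the content of the preceding proposition, which uses $e(\bar\rho')=-e(\bar\rho)\neq e(\bar\rho)$ together with Proposition~\ref{prop:euler_class_conjugacy_class}.

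Putting this together, $[\rho]$ and $[\rho']$ are two distinct conjugacy classes of irreducible $\SUII$-representations of $\pi_1(\mfd)$ sending $h$ to $-\I$; since there are only two such classes, these exhaust them, which is the assertion of the corollary. The one point requiring care — and the one I would spell out most explicitly — is the bookkeeping around $\rho'$: that the fiber-orientation-reversing homeomorphism of Remark~\ref{remark:isom_manifolds} genuinely produces a representation of $\pi_1(\mfd)$ (not merely of $\pi_1(\mfd')$) and still sends the central element $h$ to $-\I$ rather than to $\I$. Everything else is immediate from results already established.
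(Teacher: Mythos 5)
Your proof is correct and follows essentially the same route the paper intends: the earlier lemma gives exactly two conjugacy classes (triple $(1,1,1)$ plus the sign of $b_1$), the preceding proposition gives non-conjugacy of $\rho$ and $\rho'$ via the euler classes, and the corollary is the combination of the two. Your extra verifications (that $\rho'$ descends to $\pi_1(\mfd)$ with $h\mapsto -\I$, and the homology-sphere argument for irreducibility) are sound and merely make explicit what the paper leaves implicit.
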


Therefore we have the isomorphism $\varphi$ from $\pi_1(\mfd)$ to $\pi_1(\mfd')$
induced by the orientation reversing homeomorphism.
The composition of $\varphi$ gives an $\TPSLR$-representation of $\pi_1(\mfd)$.
These two $\TPSLR$-representations give the representatives of 
different conjugacy classes in the set of $\SLR$-representations of $\pi_1(\mfd)$.

This observation can be extend to a general Seifert manifold.
\begin{theorem}
  Suppose that $M$ is a Seifert manifold and $M'$ is the orientation reversed manifold $-M$ along fibers.
  If there exists a $\TPSLR$-representation $\trho$ of $\pi_1(M)$, then
  we also have a $\TPSLR$-representation $\trho'$ of $\pi_1(M')$ such that
  \begin{itemize}
  \item $\trho'$ induces an $\SLR$-representation $\rho'$ of $\pi_1(M)$
    which arises the following asymptotic behavior of the Reidemeister torsion:
    \[
    \lim_{N \to \infty}
    \frac{\log|\Tor{\Seifert}{\rho'_{2N}}|
    }{2N}
    =
    \lim_{N \to \infty}
    \frac{\log|\Tor{\Seifert}{\rho_{2N}}|
    }{2N}
    \]
    where $\rho$ is the $\SLR$-representation induced by $\trho$;
  \item $e(\bar\rho') = -e(\bar \rho)$
    where $\bar \rho$ and $\bar \rho'$ are the induced $\PSLR$-representation of $\Gamma$.
  \end{itemize}
\end{theorem}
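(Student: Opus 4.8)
The plan is to reduce the statement to the asymptotic formula of Theorem~\ref{thm:asymptotics_Rtorsion} (in the form of Theorem~\ref{thm:main_I}) combined with the homeomorphism invariance of the Reidemeister torsion, so that everything becomes a matter of tracking Seifert invariants under reversing the orientation along fibers.

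First I would write the Seifert index of $M$ as $(g; (1,b),(\alpha_1,\beta_1),\ldots,(\alpha_n,\beta_n))$ with $0\le\beta_j<\alpha_j$. Since $\trho$ exists, Lemma~\ref{lemma:h_to_shift_1} forces $(b;\beta_1,\ldots,\beta_n)$ to satisfy the criterion of Theorem~\ref{thm:criterion_JN}, and the induced $\PSLR$-representation has euler class $e(\bar\rho)=bx_0+\beta_1x_1+\cdots+\beta_nx_n$. Next I would observe that reversing the orientation along fibers sends the euler class to $-e(\bar\rho)$, and that putting $-e(\bar\rho)$ into normal form in $H^2(\Gamma;\Z)=ab\langle x_0,\ldots,x_n\mid\alpha_ix_i=x_0\rangle$ (via $-\beta_jx_j=(\alpha_j-\beta_j)x_j-x_0$ when $\beta_j\ne0$) reproduces exactly the Seifert index of $M'=-M$, with fiber data $\beta'_j\equiv-\beta_j\pmod{\alpha_j}$, $0\le\beta'_j<\alpha_j$. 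The one computational point is to check that the Jankins--Neumann criterion is symmetric under $e\mapsto-e$: for $g>0$ the inequality $2-2g-n\le b\le 2g-2$ is unchanged under $b\mapsto -b-n$, and for $g=0$ the three subcases are permuted among themselves (the subcase $2-n\le b\le-2$ is self-symmetric, while the $b=-1$ and $b=1-n$ subcases are interchanged). Hence $-e(\bar\rho)$ is again realized, so by Theorems~\ref{thm:criterion_JN} and~\ref{thm:components_fuchsian} there is a $\PSLR$-representation $\bar\rho'$ of $\Gamma$ with $e(\bar\rho')=-e(\bar\rho)$; its lift is a $\TPSLR$-representation $\trho'$ of the pull-back central extension, which is $\pi_1(M')$, with $\trho'(h')=\shift(1)$, and composing with $\TPSLR\to\SLR$ gives $\rho'$ with $\rho'(h')=-\I$. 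This both produces $\trho'$ and proves the second bullet.

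Then I would invoke the orientation-reversing homeomorphism $M\xrightarrow{\sim}M'$ along fibers, which induces an isomorphism $\varphi\co\pi_1(M)\to\pi_1(M')$ covering $\mathrm{id}_\Gamma$ with $\varphi(h)=h'^{-1}$ (the general analogue of the explicit formula used in the $(2,3,7)$ example above). Transporting $\rho'$ from $\pi_1(M')$ to $\pi_1(M)$ along $\varphi$, i.e.\ replacing it by $\rho'\circ\varphi$, yields the $\SLR$-representation of $\pi_1(M)$ named $\rho'$ in the statement; then $\rho'(h)=-\I$, the $\PSLR$-representation of $\Gamma$ it induces is $\bar\rho'$ (because $\varphi$ covers $\mathrm{id}_\Gamma$), and since $\varphi$ comes from a homeomorphism, invariance of the Reidemeister torsion gives $|\Tor{M}{(\rho'\circ\varphi)_{2N}}|=|\Tor{M'}{\rho'_{2N}}|$ for every $N$. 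Finally, Theorem~\ref{thm:main_I} applied to $(M,\rho)$ and to $(M',\rho')$ gives the two limits as $-(2-2g-\sum_j(\lambda_j-1)/\lambda_j)\log2$ with $\lambda_j=\alpha_j/(\alpha_j,\beta_j)$ and $\lambda'_j=\alpha_j/(\alpha_j,\beta'_j)$ respectively; since $\gcd(\alpha_j,\beta'_j)$ depends only on $\beta'_j$ modulo $\alpha_j$ and $\beta'_j\equiv-\beta_j$, one has $\lambda'_j=\lambda_j$ for all $j$, so the two limits coincide, and together with the torsion equality this gives the first bullet.

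The analytic content is already packaged in the cited Theorem~\ref{thm:asymptotics_Rtorsion}, so I expect the only real obstacle to be the bookkeeping: pinning down precisely how the Seifert invariants, the generators $q_j,h$, and the euler class transform under reversing orientation along fibers, and confirming that the Jankins--Neumann realizability condition (especially the genus-zero subcases, and the degenerate cases $\beta_j=0$ where the corresponding fiber is regular) survives this operation.
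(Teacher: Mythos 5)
Your route is genuinely different from the paper's. The paper's proof is a transport argument: it takes the fiber--orientation--reversing homeomorphism $\varphi \co M \to M'$ and simply defines $\trho'$ (and $\rho'$) by composing $\trho$ (resp.\ $\rho$) with $\varphi_*^{-1}$; the equality of the two limits is then immediate from the homeomorphism invariance of $|\Tor{\cdot}{\cdot}|$, and the sign change of the euler class comes from $\varphi_*(h)=h'^{-1}$, exactly as in the $(2,3,7)$ example preceding the theorem. You instead re-manufacture a representation in the fiber $e^{-1}(-e(\bar\rho))$ by appealing to Theorems~\ref{thm:criterion_JN} and~\ref{thm:components_fuchsian}, identify the normal form of $-e(\bar\rho)$ with the Seifert index of $M'$, and then recover the equality of limits by applying Theorem~\ref{thm:main_I} on both sides together with $\gcd(\alpha_j,\beta_j)=\gcd(\alpha_j,\alpha_j-\beta_j)$. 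This makes explicit the index bookkeeping that the paper only illustrates in the example, but at the cost of an extra realizability argument the paper never needs.

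That extra step is where your proof has a gap as written: the claimed symmetry of the Jankins--Neumann criterion under $e\mapsto -e$. The substitution $b\mapsto -b-n$ is only valid when every $\beta_j\neq 0$; if $\beta_j=0$ for some $j$ (equivalently $\bar\rho(q_j)=1$, since $\TPSLR$ is torsion-free), negation sends $b$ to $-b-m$ with $m=\#\{j : \beta_j\neq 0\}<n$, and then the inequalities of Theorem~\ref{thm:criterion_JN}, taken verbatim, are not preserved: for $g=1$, $n=1$, the normal form $(b,\beta_1)=(-1,0)$ satisfies $2-2g-n\le b\le 2g-2$, while its negative has normal form $(1,0)$ and does not. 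So the degenerate case you defer to ``bookkeeping'' is a genuine issue, not a formality. It is easily repaired, and the repair essentially collapses your construction into the paper's: realizability of $-e(\bar\rho)$ holds without any case analysis because composing $\bar\rho$ with conjugation by an orientation-reversing element of $\mathrm{PGL}_2(\R)$ is again a $\PSLR$-representation of $\Gamma$ and negates the euler class (equivalently, transport $\trho$ through $\varphi_*$, which turns the $\shift(1)$-lift of $h$ into a $\shift(-1)$-lift of $h'$ and twist back). With $\bar\rho'$ obtained this way one also gets $\lambda'_j=\lambda_j$ for free, since $\bar\rho'(q_j)$ has the same order as $\bar\rho(q_j)$; the remainder of your argument (invariance of the torsion under $\varphi$, Theorem~\ref{thm:main_I}, and the gcd computation) is correct.
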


\begin{proof}
  Let $\varphi$ denote the orientation reversing homeomorphism from $M$ to $M'$.
  Then we can choose the composition $\rho \circ \varphi^{-1}$ as $\rho'$.
  By the construction,
  we can see the equality on the asymptotic behaviors of the Reidemeister torsions.
  We also have the equality on the euler classes of $\PSLR$-representations of $\Gamma$.
\end{proof}

\section*{Acknowledgment}
The author wishes to express his thanks to Professor Makoto Sakuma for drawing the author's attention to 
$\PSLR$-representations of a Fuchsian group and the induced asymptotic behavior of
the Reidemeister torsion.
This research was supported by 
by JSPS KAKENHI Grant Number $26800030$.
\bibliographystyle{amsalpha}
\bibliography{torsionSLgeometry}

\end{document}